\newtheorem{theorem}{Theorem}[section]
\newtheorem{lemma}[theorem]{Lemma}
\newtheorem{remark}[theorem]{Remark}
\newtheorem{proposition}[theorem]{Proposition}
\theoremstyle{definition}
\journal{**}
\begin{document}
\begin{frontmatter}

\title{The structure of strong $k$-quasi-transitive digraphs with large diameters
\tnoteref{label1}}\tnotetext[label1]{This work is supported by the National Natural Science Foundation for Young Scientists of China (11401354)(11501490)(11501341).}
\author{Ruixia Wang\corref{cor1}}
\ead{wangrx@sxu.edu.cn}\cortext[cor1]{Corresponding  author.}
\author{Hui Zhang}
\address{School of Mathematical Sciences, Shanxi University, Taiyuan, Shanxi,
              PR China\fnref{label3}}
\begin{abstract}
Let $k$ be an integer with $k\geq 2$. A digraph $D$ is $k$-quasi-transitive, if for any path $x_0x_1\ldots x_k$ of length $k$,
$x_0$ and $x_k$ are adjacent. Suppose that there exists a path of length at least $k+2$ in $D$.
Let $P$ be a shortest path of length $k+2$ in $D$. Wang and Zhang [Hamiltonian paths in $k$-quasi-transitive digraphs,
Discrete Mathematics, 339(8) (2016) 2094--2099] proved that if $k$ is even and $k\ge 4$, then $D[V(P)]$ and $D[V(D)\setminus V(P)]$ are both semicomplete digraphs.  In this paper, we shall prove that if $k$ is odd and $k\ge 5$, then $D[V(P)]$ is either
a semicomplete digraph or a semicomplete bipartite digraph and $D[V(D)\setminus V(P)]$
is either a semicomplete digraph, a semicomplete bipartite digraph or an empty digraph.
\end{abstract}

\begin{keyword}  $k$-quasi-transitive digraph; semicomplete digraph; semicomplete bipartite digraph
\end{keyword}
\end{frontmatter}

\section{Terminology and introduction}

We shall assume that the reader is familiar with the standard terminology on
digraphs and refer the reader to \cite{bang2} for terminology not defined here.
We only consider finite digraphs without loops or multiple arcs. Let $D$ be a
digraph with vertex set $V(D)$ and arc set $A(D)$.
For any $x,y\in V(D)$, we will write $x\rightarrow y$ if $xy\in
A(D)$, and also, we will write $\overline{xy}$ if $x\rightarrow y$ or $y\rightarrow x$.
For disjoint subsets $X$ and $Y$ of $V(D)$, $X\rightarrow Y$ means
that every vertex of $X$ dominates every
vertex of $Y$, $X\Rightarrow Y$ means that there is no arc from
$Y$ to $X$ and $X\mapsto Y$ means that both of $X\rightarrow Y$
and $X\Rightarrow Y$ hold. For subsets $X,Y$ of $V(D)$, we define $(X,Y)=\{xy\in  A(D):  x\in X, y\in Y\}$.
Let $H$ be a subset of $V(D)$ and $x\in V(D)\setminus H$.
We say that $x$ and $H$ are adjacent if $x$ and some vertex of $H$ are adjacent.  For $S\subseteq V(D)$,
we denote by $D[S]$ the subdigraph of $D$ induced by the vertex set $S$. The converse of $D$,  $\overleftarrow{D}$, is the digraph which is obtained from $D$ by reversing all arcs.

Let $x$ and $y$ be two vertices of $V(D)$. The {\it distance}
from $x$ to $y$ in $D$, denoted $d(x,y)$, is the minimum length of an $(x, y)$-path,
if $y$ is reachable from $x$, and otherwise $d(x, y)=\infty$.
The distance from a set $X$ to a set $Y$ of vertices in $D$ is
$d(X, Y ) = \max\{d(x, y): x\in X, y\in Y\}$. The {\it diameter} of $D$ is diam$(D)= d(V(D),V(D))$.
Clearly, $D$ has finite diameter if and only if it is strong.

Let $P=y_0y_1\ldots y_m$ be a path or a cycle of $D$. For $i<j$, $y_i, y_j\in V(P)$
we denote the subpath $y_iy_{i+1}\ldots y_j$ of $P$ by $P[y_i, y_j]$.
Let $Q=q_0q_1\ldots q_n$ be a vertex-disjoint  path or  cycle with $P$ in $D$.
If there exist $y_i\in V(P)$ and $q_j\in V(Q)$ such that $y_iq_j\in A(D)$,
then we will use $P[y_0, y_i]Q[q_j,q_n]$ to denote the path $y_0y_1\ldots y_iq_jq_{j+1}\ldots q_n$.

A digraph is quasi-transitive, if for any path $x_0x_1x_2$ of length 2, $x_0$ and $x_2$ are adjacent.
The concept of $k$-quasi-transitive digraphs was introduced in \cite{cruz}
as a generalization of quasi-transitive digraphs. A digraph is {\it $k$-quasi-transitive},
if for any  path $x_0x_1\ldots x_k$ of length $k$, $x_0$ and $x_k$ are adjacent.
The $k$-quasi-transitive digraph has been studied in \cite{cruz2013, cruz, wang3, wang2, wang5}.

The following theorem completely characterizes strong quasi-transitive digraphs
in recursive sense.

\begin{theorem}\cite{bang1}
Let $D$ be a strong quasi-transitive digraph. Then there exists a strong semicomplete digraph $S$ with vertices
 $\{v_1, v_2,\ldots, v_s\}$ and quasi-transitive digraphs $\{Q_1, Q_2, \ldots, Q_s\}$ such
that $Q_i$ is either a vertex or is non-strong and $D=S[Q_1,Q_2, \ldots, Q_s]$,
where $Q_i$ is substituted for $v_i$, $i=1, 2,\ldots, s$.
\end{theorem}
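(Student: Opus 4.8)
The plan is to build the decomposition from the \emph{non-adjacency graph} of $D$: let $H$ be the undirected graph on $V(D)$ in which $uv$ is an edge precisely when $u$ and $v$ are \emph{not} adjacent in $D$ (that is, when $\overline{uv}$ fails). Let $Q_1,\dots,Q_s$ be the vertex sets of the connected components of $H$, and let $S$ be the digraph obtained from $D$ by contracting each $Q_i$ to a single vertex $v_i$. I will reduce the theorem to four assertions: (1) each $Q_i$ is a module of $D$ (every vertex outside $Q_i$ relates in the same way to all of $Q_i$), so that $S=D[Q_1,\dots,Q_s]$ is well defined and $D=S[Q_1,\dots,Q_s]$; (2) $S$ is semicomplete; (3) $S$ is strong; and (4) each $Q_i$ is either a single vertex or induces a non-strong subdigraph. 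Granting these, $S$ is a strong semicomplete digraph and the $Q_i$ are quasi-transitive (as induced subdigraphs of $D$) and trivial or non-strong, which is exactly the conclusion.

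For (1) the key is a two-path argument. If $uv$ is an edge of $H$ (so $u,v$ are non-adjacent) and $w$ is adjacent to both $u$ and $v$, then $w$ relates to $u$ and to $v$ in the same way: were, say, $w\rightarrow u$ but $v\rightarrow w$, then $v\rightarrow w\rightarrow u$ would be a path of length $2$, forcing $u,v$ adjacent by quasi-transitivity, a contradiction; the remaining cases are symmetric. Since any vertex outside a component $Q_i$ is adjacent in $D$ to every vertex of $Q_i$ (otherwise it would lie in $Q_i$), walking along a path of $H$ inside $Q_i$ and applying this fact at each edge shows that such a vertex relates uniformly to all of $Q_i$; hence $Q_i$ is a module. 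Assertion (2) is immediate: two vertices in different components are adjacent in $D$, so $v_i$ and $v_j$ are adjacent in $S$ for $i\neq j$. Assertion (3) follows because $D$ is strong and the image in $S$ of a path of $D$ is a walk of $S$, so every $v_i$ reaches every $v_j$.

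The engine for (4) is a rigidity property of distances in quasi-transitive digraphs. First, a shortest path has length at most $3$: along a shortest path $x_0x_1\ldots x_m$, quasi-transitivity applied to $x_ix_{i+1}x_{i+2}$ together with minimality gives the backward arcs $x_{i+2}\rightarrow x_i$, so from $x_4\rightarrow x_2\rightarrow x_0$ we would get $x_0,x_4$ adjacent, contradicting that $x_0x_1\ldots x_4$ is a shortest path; hence $m\le 3$. Second, a non-adjacent pair cannot be at distance $1$ or $2$ (distance $2$ would produce a length-$2$ path and hence adjacency), so in a strong quasi-transitive digraph every non-adjacent pair is at distance exactly $3$ in both directions. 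In particular, for non-adjacent $x,y$ a shortest path has the form $x\rightarrow a\rightarrow b\rightarrow y$ with backward arcs $b\rightarrow x$ and $y\rightarrow a$, so $a$ is a common out-neighbour and $b$ a common in-neighbour of $x$ and $y$, joined by $a\rightarrow b$.

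The main obstacle is assertion (4), equivalently the statement that a strong quasi-transitive digraph which is not semicomplete has disconnected non-adjacency graph $H$ (so that $s\ge 2$ and no $Q_i$ with $|Q_i|\ge2$ is strong). Indeed, a component $Q$ with $|Q|\ge 2$ contains a non-adjacent pair, so if $D[Q]$ were strong it would be a strong quasi-transitive non-semicomplete digraph whose own non-adjacency graph is $H[Q]$, which is connected because $Q$ is a component of $H$; ruling this out is precisely the crux. I would prove it by a minimal-counterexample analysis: starting from the distance-$3$ skeleton $x\rightarrow a\rightarrow b\rightarrow y$ above and the uniformity fact from (1), I would track how an arbitrary vertex attaches to the common in- and out-neighbourhoods of a non-adjacent pair and show that this always exposes a vertex adjacent to the entire component of $x$ in $H$, contradicting connectedness. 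This half of the argument interlocks by mutual recursion with the companion statement for non-strong quasi-transitive digraphs (whose distinct strong components satisfy $A\mapsto B$ whenever an arc runs from $A$ to $B$, making the condensation a transitive oriented digraph), and setting up both statements simultaneously is what makes the recursion close.
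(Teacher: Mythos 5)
The statement is quoted in the paper from Bang-Jensen and Huang \cite{bang1} without proof, so your attempt can only be measured against the original argument. Your framework is in fact the Bang-Jensen--Huang architecture: the parts $Q_1,\ldots,Q_s$ are the components of the non-adjacency graph $H$, each is a module by the two-path argument, the quotient $S$ is semicomplete and strong, and the whole construction interlocks by mutual recursion with the non-strong case (where distinct strong components $A,B$ with an arc from $A$ to $B$ satisfy $A\mapsto B$). Your proofs of assertions (1)--(3) are correct. But assertion (4) --- which you yourself call the crux --- is never proved: the passage beginning ``I would prove it by a minimal-counterexample analysis'' is a plan, not an argument. Showing that a strong quasi-transitive digraph on at least two vertices cannot have a connected non-adjacency graph (equivalently, that no component $Q_i$ with $|Q_i|\ge 2$ induces a strong subdigraph, so that in particular $s\ge 2$ when $D$ is not semicomplete) is the substantive content of the theorem; in \cite{bang1} it occupies the main lemmas and is carried out by induction on the order of the digraph, with the strong and non-strong decompositions proved simultaneously. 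Until the step ``track how an arbitrary vertex attaches\ldots and show that this always exposes a vertex adjacent to the entire component'' is actually carried out, the proof is incomplete at precisely its central point.

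There is also a smaller, fixable error in your distance engine. You claim that along a shortest path $x_0x_1\ldots x_m$ with $m\ge 4$, the adjacency of $x_0$ and $x_4$ forced by $x_4\rightarrow x_2\rightarrow x_0$ ``contradicts that $x_0x_1\ldots x_4$ is a shortest path.'' It does not: only the arc $x_0\rightarrow x_4$ shortcuts the path, while the backward arc $x_4\rightarrow x_0$ is perfectly compatible with $d(x_0,x_4)=4$. Indeed, strong tournaments are quasi-transitive and can have diameter $4$ or more, so the assertion ``a shortest path has length at most $3$'' is false as stated; it holds only for shortest paths between \emph{non-adjacent} endpoints. For such a pair the case $m=4$ is settled because $x_4\rightarrow x_0$ would contradict non-adjacency, but $m\ge 5$ requires a further induction (repeatedly applying quasi-transitivity to the backward arcs to walk the terminal vertex down to $x_0$). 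The corrected conclusion --- in a strong quasi-transitive digraph every non-adjacent pair $x,y$ satisfies $d(x,y)=3$, realized as $x\rightarrow u\rightarrow v\rightarrow y$ with $y\rightarrow u$ and $v\rightarrow x$ --- is exactly the key lemma of \cite{bang1} and remains true, but your one-line justification of it does not stand.
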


In \cite{galeana2010}, Galeana-S\'{a}nchez et al. characterized strong 3-quasi-transitive digraphs. Let $F_n$ be a digraph with vertex set $\{x_0, x_1, \ldots, x_n\}$ and arc set $\{x_0x_1, x_1x_2, x_2x_0\}\cup \{x_ix_1, x_0x_i, i=3,4,\ldots, n\}$, where $n\ge 3$.

\begin{theorem}\cite{galeana2010} Let $D$ be a strong 3-quasi-transitive digraph of order $n$. Then $D$ is either a semicomplete digraph, a semicomplete bipartite digraph, or isomorphic to $F_n$.
\end{theorem}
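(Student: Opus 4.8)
The plan is to run a case analysis driven by a single local obstruction coming from forbidden paths of length $3$. Throughout, write $N^+(x)$ and $N^-(x)$ for the out- and in-neighbourhoods of a vertex $x$. The engine is the following \emph{gap lemma}: if $u,v$ are non-adjacent, then $(N^+(u),N^-(v))=\emptyset$. Indeed, an arc $ab$ with $a\in N^+(u)$ and $b\in N^-(v)$ would, together with $u\to a$ and $b\to v$, produce a path $uabv$ of length $3$ whose four vertices are automatically distinct (coincidences among them all force an arc between $u$ and $v$, contrary to $\overline{uv}$ failing), and $3$-quasi-transitivity would then give $\overline{uv}$, a contradiction. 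Applying the same to the pair $v,u$ gives $(N^+(v),N^-(u))=\emptyset$; and since $\overleftarrow{D}$ is again strong and $3$-quasi-transitive, I would freely transport every statement to the converse to halve the orientation cases.

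If every two vertices are adjacent, $D$ is semicomplete and we are done, so I assume the independence number is at least $2$. The first substantial step is to prove that non-adjacency is \emph{transitive}: for distinct $u,v,w$, if $u,v$ and $v,w$ are non-adjacent then so are $u,w$. Equivalently, the non-adjacency graph $G$ of $D$ (edges $=$ non-adjacent pairs) is a disjoint union of cliques, so that $V(D)$ partitions into independent ``parts'' with every inter-part pair adjacent. I would argue by contradiction, taking $\overline{uw}$ (say $u\to w$) with $u,w$ both non-adjacent to $v$, and feeding the arc $u\to w$ into the gap lemma applied to the pairs $(u,v)$ and $(v,w)$, together with strong connectivity to reach $v$ from $w$ and $u$ from $v$. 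The delicate point already appears here: the gap lemma forbids $2$-paths $w\to z\to v$, so non-adjacent vertices can be genuinely far apart (in $F_n$ two pendant vertices $x_i,x_j$ lie at distance $4$), and the argument must trace the \emph{long} paths forced by strongness rather than lean on a cheap distance-$2$ lemma.

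With $G$ a disjoint union of cliques, the theorem reduces to asking which part patterns survive. I would classify by the number of parts of size $\ge 2$. If there are none, $D$ is semicomplete. If there are exactly two such parts (and no further vertices), then the two parts are independent, all cross pairs are adjacent, and $D$ is semicomplete bipartite. The interesting configuration is one large independent part $I=\{x_2,\dots,x_n\}$ together with exactly two singleton parts $\{x_0\},\{x_1\}$: here $x_0,x_1$ are adjacent to everything and to each other, and I would use the gap lemma on non-adjacent pairs inside $I$ to forbid the simultaneous orientations $x_0\to x_i$ with $x_i\to x_1$ being ``reversed'', pinning down the directed triangle $x_0x_1x_2$ and the pendant arcs $x_0\to x_i\to x_1$ for $i\ge 3$. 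Assembling the forced arcs yields exactly $F_n$.

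The genuine obstacle is the \emph{rigidity} needed to close this classification: ruling out three or more large parts, a large part with three or more attached singletons, and two large parts together with an extra singleton, and then forcing the precise $F_n$ orientation in the surviving case. These are exactly the configurations that the gap lemma alone does not exclude locally—each is consistent with the forbidden-$2$-path constraints—so they must be eliminated by a global argument that exploits strong connectivity through the long ($\ge 4$) paths that strongness forces between vertices of a common part. Marshalling these long-path constraints to prove both the cluster structure of $G$ and the uniqueness of the $F_n$ pattern is where the bulk of the work lies; by contrast, verifying directly that $F_n$ is strong and $3$-quasi-transitive is routine.
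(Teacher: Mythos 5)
This theorem is not proved in the paper at all: it is quoted verbatim from \cite{galeana2010}, so there is no in-paper argument to compare against, and your proposal must be judged as a standalone proof attempt. Judged that way, it has a genuine gap --- or rather, it is an outline whose decisive steps are all deferred. Your gap lemma is correct (including the distinctness check: any coincidence among $u,a,b,v$ forces an arc between $u$ and $v$), but it is a two-line consequence of the definition of $3$-quasi-transitivity. Everything that makes the theorem nontrivial is left undone: (i) the transitivity of non-adjacency is asserted with only a gesture (``feeding the arc $u\to w$ into the gap lemma \ldots together with strong connectivity''), and you yourself observe that the cheap route fails because non-adjacent vertices can sit at distance $4$, as the pendant vertices of $F_n$ do, so some genuinely global use of strongness is needed that the sketch never supplies; (ii) the exclusion of the surviving bad part-patterns (three or more parts of size $\ge 2$, a large part with three or more universal singletons, two large parts plus a singleton) and (iii) the forcing of the exact $F_n$ orientation in the one-large-part-two-singletons case are explicitly conceded as ``where the bulk of the work lies.'' Since (i)--(iii) constitute essentially the entire content of the classification, what you have written is a plan, not a proof.

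One local statement also needs repair: ``the gap lemma forbids $2$-paths $w\to z\to v$'' is false as written --- $2$-paths between non-adjacent vertices are ubiquitous in the objects being classified (in a semicomplete bipartite digraph with digons, and in $F_n$ itself via $x_2\to x_0\to x_i$). The prohibition is valid only conditionally, in the presence of the auxiliary arc $u\to w$ with $u,v$ non-adjacent, via the length-$3$ path $uwzv$; if it were used unconditionally anywhere in the case analysis, the argument would wrongly exclude legitimate members of the classification. For comparison, the published proof in \cite{galeana2010} does not first establish your clique-partition of the non-adjacency relation; it analyzes shortest paths between a fixed non-adjacent pair in a strong digraph, constraining the possible distances and showing that the distance-$4$ configuration collapses to $F_n$. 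Your cluster-first strategy is plausible and consistent with the truth, but the rigidity arguments you defer are exactly where that published proof spends its effort, and nothing in the sketch indicates a shortcut around them.
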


Let $D$ be a strong $k$-quasi-transitive digraph with diam$(D)\ge k+2$. Then there exist two vertices $u,v\in V(D)$ such that $d(u,v)=k+2$. Let $P$ be a shortest $(u,v)$-path in $D$.
In \cite{wang5}, Wang and Zhang proved that if $k(\ge 4)$ is even, then $D[V(P)]$ and
$D[V(D)\setminus V(P)]$ are both semicomplete digraphs. In Section 2, we prove the following theorem.

\begin{theorem}\label{main result}
  Let $k$ be an odd integer with $k\ge 5$, $D$ be a strong $k$-quasi-transitive digraph with diam$(D)\ge k+2$ and $u,v\in V(D)$ such that $d(u,v)=k+2$. If $P$ is a shortest $(u,v)$-path, then $D[V(P)]$ is either
a semicomplete digraph or a semicomplete bipartite digraph and $D[V(D)\setminus V(P)]$
is either a semicomplete digraph, a semicomplete bipartite digraph or an empty digraph.
\end{theorem}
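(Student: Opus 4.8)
The plan is to first pin down all possible arcs inside $V(P)$, then bootstrap the ``odd-gap'' adjacencies, and finally split into cases according to whether any ``even-gap'' arc occurs. Write $P=v_0v_1\cdots v_{k+2}$ with $v_0=u$, $v_{k+2}=v$. Since $P$ is a shortest $(u,v)$-path, every subpath of $P$ is shortest, so $d(v_i,v_j)=j-i$ for $i<j$; in particular no arc $v_i\to v_j$ with $j\ge i+2$ can exist (it would give $d(v_i,v_j)=1$), hence the only forward arcs are the path arcs $v_i\to v_{i+1}$, and every other adjacency inside $V(P)$ is a backward arc $v_j\to v_i$ with $j>i$. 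Applying $k$-quasi-transitivity to the length-$k$ subpaths $v_iv_{i+1}\cdots v_{i+k}$ for $i=0,1,2$ and using $d(v_i,v_{i+k})=k$ forces $v_{i+k}\to v_i$; wrapping $v_{k+2}\to v_2$ around through $v_k\to v_0$ gives the length-$k$ path $v_{k+2}v_2v_3\cdots v_kv_0$ and hence $v_{k+2}\to v_0$.

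The central observation is a parity phenomenon: a forward arc and an odd-gap backward arc each reverse the parity of the index, while an even-gap backward arc preserves it. Since $k$ is odd, the two endpoints of any length-$k$ path that stays inside $V(P)$ have opposite index-parity when an even number of even-gap arcs is used, and equal parity exactly when an odd number is used. I would exploit this as follows. Using only $v_{k+2}\to v_0$ together with forward arcs, the wrapped path $v_{b+3}v_{b+4}\cdots v_{k+2}v_0v_1\cdots v_b$ has length $k$ and endpoints $v_{b+3},v_b$ for every $b$, so every gap-$3$ pair is adjacent. Then, combining the gap-$3$, gap-$k$ and gap-$(k+2)$ backward arcs with forward arcs, a displacement count ($+1$ per forward arc, $-g$ per backward arc of gap $g$, total length $k$) shows one can realise any odd displacement at most $k+2$, and I would prove by induction on the gap that all odd-gap pairs of $V(P)$ are adjacent. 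This already yields the semicomplete bipartite skeleton with parts $\{v_i:i\text{ even}\}$ and $\{v_i:i\text{ odd}\}$.

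For the dichotomy I would argue: if no even-gap backward arc exists, then every arc of $D[V(P)]$ joins vertices of opposite parity and every such pair is adjacent, so $D[V(P)]$ is precisely a semicomplete bipartite digraph. If instead some even-gap arc $v_j\to v_i$ is present, then inserting this single arc once into length-$k$ wrapped paths produces paths with same-parity endpoints; the same displacement bookkeeping, now permitting one even jump, lets me bootstrap all even-gap adjacencies, so together with the odd-gap skeleton every pair of $V(P)$ becomes adjacent and $D[V(P)]$ is semicomplete. The one genuinely technical point here is checking that each length-$k$ walk written down is a \emph{simple} path; because the intermediate backward arcs are available at every position, the descents can be placed at disjoint index blocks, which I would verify case by case.

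Finally, for $D[R]$ with $R=V(D)\setminus V(P)$: using strong connectivity, each $x\in R$ lies on $(x,P)$- and $(P,x)$-paths, and the bound $d(u,v)=k+2$ together with $k$-quasi-transitivity constrains which vertices of $P$ such an $x$ may be adjacent to. I would then take $x,y\in R$ and, when they attach to $P$ compatibly, splice short excursions of $x$ and $y$ with a segment of $P$ into a length-$k$ path from $x$ to $y$ (or from $y$ to $x$), forcing $\overline{xy}$; the same parity analysis explains why only opposite-type attachments are forced, producing the semicomplete or semicomplete bipartite alternatives, while the case in which no internal arc is ever forced gives the empty digraph. I expect this last part, namely controlling the attachment of external vertices to the long path $P$ and ruling out the intermediate configurations, to be the main obstacle, more delicate than the within-$P$ analysis and absorbing the bulk of the case work.
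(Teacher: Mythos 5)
Your analysis of $D[V(P)]$ is essentially the paper's: establish adjacency of all opposite-parity pairs (the semicomplete bipartite skeleton, the paper's Lemma~\ref{bipartite}(1)), then show that a single same-parity adjacency propagates to make $D[V(P)]$ semicomplete (Lemma~\ref{semicomplete}). Your displacement/parity bookkeeping is a reasonable way to organize the explicit length-$k$ wrapped paths the paper writes out, and the simplicity check you flag is indeed the only technical point there. That half is sound in outline.

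The genuine gap is in the second half. Your splicing strategy --- route a length-$k$ path from $x$ through a segment of $P$ into $y$ --- presupposes that $y$ has an in-neighbour on $P$ and $x$ has an out-neighbour on $P$. But $V(D)\setminus V(P)$ contains the sets $I=\{x: x\Rightarrow V(P)\}$ (no arcs from $P$ into $x$) and $W=\{x: V(P)\Rightarrow x\}$ (no arcs from $x$ into $P$), and for two vertices both in $I$ (or both in $W$) no such spliced path exists: you cannot terminate a path coming out of $P$ at a vertex of $I$. This is exactly where the paper has to work hardest --- it introduces $\widetilde{I}$ (the vertices of $I$ reachable from $B\cup W$ in one step), proves Claims 4--11 controlling all arcs among $I_1,I_2,W_1,W_2,B_1,B_2$, and then runs an induction on the distance within $D[I]$ from $\widetilde{I}$ to force adjacency between $I_1\setminus\widetilde{I}$ and $I_2\setminus\widetilde{I}$. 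None of this is visible in your sketch; saying the last part ``absorbs the bulk of the case work'' defers precisely the step that does not follow from the splicing idea. A second, smaller defect: your ``empty digraph'' alternative is justified by ``no internal arc is ever forced,'' but the theorem asserts the induced subdigraph \emph{is} empty (or semicomplete bipartite), which requires positively proving that $I_1$, $I_2$, $W_1$, $W_2$, $B_1$, $B_2$ are independent sets (the paper's Claim~7 and the independence argument in Lemma~\ref{semiDvp}), not merely failing to exhibit an arc.
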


\section{Main results}

We begin with a rather trivial observation.

\begin{remark} \label{remark} A  digraph $D$ is $k$-quasi-transitive if and only if $\overleftarrow{D}$ is $k$-quasi-transitive.
\end{remark}

\begin{proposition} \label{back} \cite{cruz2013}   Let $k$ be an integer with $k\ge 2$, $D$ be a $k$-quasi-transitive digraph
and $u,v\in V(D)$ such that $d(u,v)=k+2$.
Suppose that $P=x_0x_1\ldots x_{k+2}$ is a shortest $(u,v)$-path, where $u=x_0$ and $v=x_{k+2}$. Then each of the following holds:

\begin{description}
\item [(1)]  $x_{k+2}\rightarrow x_{k-i}$, for every odd $i$ such that $1\le i\le k$.
\item [(2)] $x_{k+1}\rightarrow x_{k-i}$, for every even $i$ such that $1\le i\le k$.
\end{description}
\end{proposition}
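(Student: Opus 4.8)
The plan is to reduce every arc asserted in (1) and (2) to an \emph{adjacency} statement, and then to produce the required adjacencies by exhibiting, for each target, a path of length exactly $k$ on which $k$-quasi-transitivity can be applied. Writing $P=x_0x_1\ldots x_{k+2}$, the first thing I would record is a rigidity property of the shortest path: if $x_a\rightarrow x_b$ with $b\ge a+2$, then $x_0x_1\ldots x_ax_bx_{b+1}\ldots x_{k+2}$ is an $(x_0,x_{k+2})$-path of length at most $k+1$, contradicting $d(x_0,x_{k+2})=k+2$. Hence $P$ admits no ``forward jumps'', and consequently whenever $x_a$ and $x_b$ with $a<b$ and $b-a\ge 2$ are adjacent, the arc must point backwards, i.e. $x_b\rightarrow x_a$. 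Since every arc asserted in (1) and (2) has span at least $3$, it suffices to establish the corresponding adjacencies and the orientation then comes for free.

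Next I would extract three \emph{base arcs}. Applying $k$-quasi-transitivity to the length-$k$ subpaths $x_jx_{j+1}\ldots x_{j+k}$ for $j=0,1,2$ and invoking rigidity gives $x_k\rightarrow x_0$, $x_{k+1}\rightarrow x_1$ and $x_{k+2}\rightarrow x_2$. One bootstrap step then yields the key long arc $x_{k+2}\rightarrow x_0$: the sequence $x_{k+2},x_2,x_3,\ldots,x_k,x_0$ is a path of length $k$ (using $x_{k+2}\rightarrow x_2$, the subpath from $x_2$ to $x_k$, and $x_k\rightarrow x_0$), so its endpoints are adjacent and rigidity forces $x_{k+2}\rightarrow x_0$. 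From this I obtain the first two targets directly: $x_{k+2},x_0,x_1,\ldots,x_{k-1}$ gives $x_{k+2}\rightarrow x_{k-1}$, and $x_{k+1},x_{k+2},x_0,x_1,\ldots,x_{k-2}$ gives $x_{k+1}\rightarrow x_{k-2}$.

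The heart of the argument is a pair of stride-$2$ inductions producing the remaining targets. For (1), assuming $x_{k+2}\rightarrow x_{k+1-2t}$ I would consider
\[
x_{k+2},\; x_{k+1-2t},\,x_{k+2-2t},\ldots,x_{k+1},\; x_1,\,x_2,\ldots,x_{k-1-2t},
\]
which jumps back by the induction hypothesis, runs forward to $x_{k+1}$, jumps to $x_1$ via the base arc $x_{k+1}\rightarrow x_1$, and runs forward to the new target. Its length is $1+2t+1+(k-2-2t)=k$, and it is a genuine path because its high indices lie in $[k+1-2t,k+2]$ while its low indices lie in $[1,k-1-2t]$, two disjoint ranges; hence $x_{k+2}\rightarrow x_{k-1-2t}$. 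For (2) I would run the mirror induction anchored at $x_{k+1}$: assuming $x_{k+1}\rightarrow x_{k-2t}$, the path
\[
x_{k+1},\; x_{k-2t},\,x_{k+1-2t},\ldots,x_k,\; x_0,\,x_1,\ldots,x_{k-2-2t}
\]
runs forward only to $x_k$ (so as not to revisit the anchor $x_{k+1}$), jumps to $x_0$ via $x_k\rightarrow x_0$, and then proceeds to the target; again the length is $k$ and the index ranges $[k-2t,k]$ and $[0,k-2-2t]$ are disjoint, so $x_{k+1}\rightarrow x_{k-2-2t}$. Iterating these two inductions, together with the separately obtained arc $x_{k+2}\rightarrow x_0$, exhausts exactly the families in (1) and (2), for both parities of $k$.

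The routine part is checking that each constructed sequence has length precisely $k$ and visits distinct vertices. The main obstacle I anticipate is organizing the induction so that the path used at each stage stays \emph{simple}: because forward jumps are forbidden, every backward step lands among already-visited vertices unless it is aimed into the still-untouched low block, so the inductions must be arranged to keep the high traversal and the low traversal on disjoint index intervals and to halt each forward run one vertex short of the anchor. Verifying these disjointness bounds at the boundary values of $t$ — the smallest targets, where a forward run degenerates to length $0$ and the path terminates immediately after a backward jump — is where the bookkeeping must be carried out with care.
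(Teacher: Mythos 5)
Your proposal is correct, but there is an important mismatch with the premise of the comparison: the paper does not prove Proposition \ref{back} at all --- it is imported verbatim from \cite{cruz2013} with a citation --- so there is no in-paper proof to measure your argument against, and your write-up functions as a self-contained replacement for the external reference. On its own merits the argument checks out. The rigidity observation (any arc $x_a\rightarrow x_b$ with $b\ge a+2$ would shortcut $P$ below length $k+2$, so every adjacency of span at least $2$ points backwards) is exactly the ``minimality of $P$'' device the paper itself uses repeatedly in Lemmas \ref{bipartite} and \ref{semicomplete}. Your three base arcs $x_k\rightarrow x_0$, $x_{k+1}\rightarrow x_1$, $x_{k+2}\rightarrow x_2$ follow from the three length-$k$ subpaths of $P$; the bootstrap sequence $x_{k+2},x_2,x_3,\ldots,x_k,x_0$ is a genuine path of length $k$ giving $x_{k+2}\rightarrow x_0$; and in both stride-$2$ inductions the length count $1+2t+1+(k-2-2t)=k$ is right, while the high index block ($[k+1-2t,k+2]$ for part (1), $[k-2t,k]\cup\{k+1\}$ for part (2)) is disjoint from the low block ($[1,k-1-2t]$, respectively $[0,k-2-2t]$) for every admissible $t$, including the degenerate boundary steps where the final forward run has length $0$. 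You also correctly spotted the one genuine trap: when $k$ is odd, the induction for (1) cannot reach $x_0$ because its low run is anchored at $x_1$, and you patch this with the separately derived arc $x_{k+2}\rightarrow x_0$; a tacit handling of this point would have been a real gap, and your explicit treatment closes it. Since every target arc in (1) and (2) has span at least $3$, rigidity orients all the produced adjacencies as claimed, in both parities of $k$ (the small cases $k=2,3$ degenerate gracefully). Methodologically, your engine --- exhibit a path of length exactly $k$, invoke $k$-quasi-transitivity, orient by minimality --- is the same one driving the paper's own structural lemmas, so the proof is not only valid but stylistically consistent with the rest of the paper.
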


From the above proposition, we have seen that for a longer path, there are some structural property in
a $k$-quasi-transitive digraph. There exist two vertices $u, v$ such that $d(u, v)=k+2$ in a
strong digraph $D$ with diam$(D)\ge k+2$. So for the rest of this paper, let $k$ be an odd integer with $k\ge 5$,
$D$ denote a strong $k$-quasi-transitive digraph with diam$(D)\ge k+2$ and $P=x_0x_1\ldots x_{k+2}$
denote a shortest $(u, v)$-path in $D$, where $u=x_0$ and $v=x_{k+2}$. Denote $O(P)=\{x_1, x_3,\ldots, x_{k+2}\}$ and
$E(P)=\{x_0, x_2,\ldots, x_{k+1}\}$.  First we show the following structural property on $D[V(P)]$.

\begin{lemma}\label{bipartite} Let $x_t$ and $x_s$ be two arbitrary vertices in $V(P)$ with $s>t$.
The following three statements hold.
\begin{description}
\item[(1)] $D[V(P)]$ contains a semicomplete bipartite digraph as its subdigraph with bipartition $(O(P), E(P))$.

\item[(2)] There is a path of length $k-2$ from $x_s$ to $x_t$ in $D[V(P)]$ when the parity of $s$ and $t$ are different.
Moreover, for any $x, y\in V(D)\setminus V(P)$, if $y\rightarrow x_s$ and $x_t\rightarrow x$, then $\overline{xy}$.

\item[(3)] There is a path of length $k-1$ from $x_s$ to $x_t$ in $D[V(P)]$ when the parity of $s$ and $t$ are same.
Moreover, for any $x\in V(D)\setminus V(P)$, if $x\rightarrow x_s$, then $\overline{xx_t}$ and $x\rightarrow x_t$ if $s\ge t+4$; if $x_t\rightarrow x$,
then $\overline{xx_s}$ and $x_s\rightarrow x$ if $s\ge t+4$.
\end{description}
\end{lemma}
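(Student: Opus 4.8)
The plan is to reduce all three parts to a single mechanism: build, inside $D[V(P)]$, a simple path of a \emph{prescribed} length between two vertices of $V(P)$, and then feed that path either into $k$-quasi-transitivity (for adjacency conclusions) or into the shortest-path property (for the directional conclusions).

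First I would assemble the arcs forced by the hypotheses. The path gives $x_i\rightarrow x_{i+1}$; Proposition~\ref{back} supplies the ``upper'' chords $x_{k+2}\rightarrow x_j$ (even $j\le k-1$) and $x_{k+1}\rightarrow x_j$ (odd $j\le k-2$). Applying Proposition~\ref{back} to $\overleftarrow{D}$ along the reversed path $x_{k+2}x_{k+1}\cdots x_0$ --- legitimate by Remark~\ref{remark} --- produces the symmetric ``lower'' chords $x_j\rightarrow x_0$ (odd $j\ge 3$) and $x_j\rightarrow x_1$ (even $j\ge 4$). Because $k$ is odd, every such chord joins $O(P)$ to $E(P)$, which already signals the bipartite, rather than semicomplete, conclusion. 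I would also record that each subpath of $P$ is itself shortest, so that $d(x_0,x_i)=i$ and $d(x_i,x_{k+2})=k+2-i$; this pins down the direction of several chords (for instance $x_k\rightarrow x_0$) and will be the tool for excluding reverse arcs.

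For part (1) I would show that an arbitrary cross pair $x_t,x_s$ ($t<s$, $s-t$ odd) is joined by a simple $(x_s,x_t)$-path of length exactly $k$ and then invoke $k$-quasi-transitivity. The gaps $s-t\in\{1,k,k+2\}$ are immediate (a path arc, Proposition~\ref{back} applied to $P$ itself, and $x_{k+2}\rightarrow x_0$). For an intermediate odd gap the construction is: run forward from $x_s$ toward the top, drop through one upper chord onto a vertex of the middle block, traverse part of the block forward, drop through one lower chord onto $x_0$ or $x_1$, and finish forward at $x_t$. The two free parameters (which upper-chord target, which lower-chord source) let me lengthen or shorten the traversal in steps of two and so hit the value $k$ while keeping all vertices distinct (this is possible since $t<s$ keeps the low and high segments disjoint). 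The same scheme, aimed at lengths $k-2$ and $k-1$, yields the backward paths needed in parts (2) and (3); the parity of the attainable length is forced by the parities of $s$ and $t$ inside the bipartite subdigraph, which is exactly why the different-parity case uses the odd length $k-2$ and the same-parity case the even length $k-1$.

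The ``Moreover'' clauses then follow quickly. Prepending $y\rightarrow x_s$ and appending $x_t\rightarrow x$ to the length-$(k-2)$ path of part (2) gives a path $y\,x_s\cdots x_t\,x$ of length $k$, so $\overline{xy}$; prefixing $x\rightarrow x_s$ (resp.\ suffixing $x_t\rightarrow x$) to the length-$(k-1)$ path of part (3) yields a length-$k$ path and hence $\overline{xx_t}$ (resp.\ $\overline{xx_s}$). To upgrade adjacency to a specific arc when $s\ge t+4$, I would argue by contradiction with the distances above: if $x_t\rightarrow x$ while $x\rightarrow x_s$, then $x_0\cdots x_t\,x\,x_s$ has length $t+2$, forcing $d(x_0,x_s)\le t+2<s$, contradicting $d(x_0,x_s)=s$; the symmetric computation using $d(x_t,x_{k+2})=k+2-t$ handles the other arc. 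The threshold $s\ge t+4$ is precisely the smallest even gap exceeding $2$, which is what this estimate requires. I expect the main obstacle to be the prescribed-length construction itself: it must be performed uniformly over all admissible $(s,t)$ for three target lengths, with a guarantee that every chord invoked is genuinely supplied by Proposition~\ref{back} or its converse and that no vertex is repeated --- a finite but delicate case analysis on the residues of $s,t$ and on $s-t$ relative to $k$. It is also here that the oddness of $k$ is used decisively: the chords land on opposite-parity vertices, so the construction never forces a same-parity arc, leaving open that $D[V(P)]$ may be only semicomplete bipartite rather than semicomplete.
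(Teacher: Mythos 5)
Your proposal is correct and takes essentially the same route as the paper: explicit paths of prescribed length ($k$ for adjacency in (1), $k-2$ and $k-1$ for (2) and (3)) assembled from the arcs of $P$ together with the chords of Proposition~\ref{back}, then fed into $k$-quasi-transitivity for the adjacency conclusions and into the minimality of $P$ (i.e.\ $d(x_0,x_s)=s$) for the arc directions when $s\ge t+4$. The only real deviation is that you obtain the lower chords $x_j\rightarrow x_0$ (and $x_j\rightarrow x_1$) by applying Proposition~\ref{back} to $\overleftarrow{D}$ via Remark~\ref{remark} --- a clean and valid substitute for the paper's induction on $j$ --- while the case analysis on $s-t$ that you defer is exactly the one the paper carries out.
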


\begin{proof} (1) It suffices to show that, for any $x_i\in E(P)$ and $x_j\in O(P)$,  $\overline{x_ix_j}$.
By Proposition \ref{back}, $x_{k+2}\rightarrow \{x_0, x_2,\ldots, x_{k-1}\}$ and $x_{k+1}\rightarrow \{x_1, x_3, \ldots, x_{k-2}\}$.
So we assume $0\le i\le k-1$ and $1\le j\le k$.
Clearly, $\overline{x_{j-1}x_j}$ and $\overline{x_{j+1}x_j}$.
Now assume that $i\le j-3$ or $i\ge j+3$.
First consider the case $0\le i\le j-3$.
For $i=0$, we proof $x_j\rightarrow x_0$ by induction on $j$.
The length of the path $P[x_3, x_{k+2}]x_0$ is $k$, which implies that $\overline{x_3x_0}$ and further $x_3\rightarrow x_0$ since $P$ is minimal.
For the inductive step, let us suppose that $x_j\rightarrow x_0$ for $3\le j\le k-2$.
The path $P[x_{j+2}, x_{k+1}]P[x_1, x_j]x_0$ implies $\overline{x_{j+2}x_0}$ and further $x_{j+2}\rightarrow x_0$.
For $i\ge 2$, the path $P[x_j, x_{k+1}]P[x_{i+1}, x_{j-2}]P[x_0,x_i]$ implies $\overline{x_ix_j}$.
Now consider the case $i\ge j+3$. By the above argument, we know that $x_{i-1}\rightarrow x_0$.
The path $P[x_i, x_{k+1}]P[x_{j+2}, x_{i-1}]P[x_0,x_j]$ implies $\overline{x_ix_j}$.
Therefore, $D[V(P)]$ contains a semicomplete bipartite digraph as its subdigraph with bipartition $(O(P),E(P))$.

(2) Assume $x_s\in E(P)$ and $x_t\in O(P)$. If $s-t=1$, then $P[x_s, x_p]P[x_{p-k+2}, x_t]$ is a path of length $k-2$,
where $p=k+1$ when $s=k+1$ and $p=k-1$ when $s<k+1$.
If $s-t=3$, then $P[x_s, x_{k+1}]P[x_1, x_t]$ is a path of length $k-2$.
If $s-t\ge 5$, then $P[x_s, x_{k+1}]P[x_{t+2}, x_{s-2}]P[x_1, x_t]$ is a path of length $k-2$.
Analogously, if $x_s\in O(P)$ and $x_t\in E(P)$, we can find the desired path.

Denote by $Q$ the path  from $x_s$ to $x_t$ of length $k-2$ in $D[V(P)]$.
Let $x, y\in V(D)\setminus V(P)$.
If $y\rightarrow x_s$ and $x_t\rightarrow x$, then the path $yQx$ implies $\overline{xy}$.

(3) Assume $x_s, x_t\in O(P)$.
If $s\le k$, then,  by (2), there is a path $Q$ of length $k-2$ from $x_{s+1}$ to
$x_t$ in $D[V(P)]$.
By the proof of (2), observe that we can find such a path $Q$ so that $x_s\notin V(Q)$.
So $x_sQ$ is the desired path. Now assume $s=k+2$. For $t=k$,  $x_{k+2}P[x_2, x_k]$ is the desired path.
For $t\in \{1, 3, \ldots, k-2\}$,  $x_{k+2}P[x_{t+1}, x_{k-1}]P[x_1, x_t]$ is the desired path.
Analogously, if $x_s, x_t\in E(P)$, we can find the desired path.

Denote by $R$ the path  from $x_s$ to $x_t$ of length $k-1$ in $D[V(P)]$. Let $x\in V(D)\setminus V(P)$ be arbitrary.
If  $x\rightarrow x_s$, then the path $xR$ implies $\overline{xx_t}$ and $x\rightarrow x_t$ if $s\ge t+4$ as $P$ is minimal;
if $x_t\rightarrow x$, then the path $Rx$ implies $\overline{xx_s}$ and $x_s\rightarrow x$ if $s\ge t+4$ as $P$ is minimal.
\end{proof}


\begin{lemma} \label{semicomplete} If there exist two vertices $x_i,  x_j\in E(P)$ or $x_i,  x_j\in O(P)$
such that $\overline{x_ix_j}$, then $D[V(P)]$ is a semicomplete digraph and for any $1\le t+1<s\le k+2$, $x_s\rightarrow x_t$.
\end{lemma}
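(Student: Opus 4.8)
The plan is to show that a single same-part adjacency forces \emph{every} pair of vertices of $V(P)$ to be adjacent; the orientation statement will then be immediate. First I would record two reductions. Since $P$ is a shortest $(u,v)$-path, there is no arc $x_t\rightarrow x_s$ with $s\ge t+2$ (such a chord would give an $(x_0,x_{k+2})$-path of length at most $k+1$, contradicting $d(u,v)=k+2$); hence any same-part adjacency $\overline{x_ix_j}$ with $i<j$ (note that $j\ge i+2$ since $i,j$ have the same parity) is in fact the backward arc $x_j\rightarrow x_i$, and once $D[V(P)]$ is shown to be semicomplete the same observation yields $x_s\rightarrow x_t$ for all $1\le t+1<s\le k+2$. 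Moreover, by Remark~\ref{remark} the converse $\overleftarrow{D}$ is again $k$-quasi-transitive, the reverse of $P$ is again a shortest path of length $k+2$, and reversing interchanges the roles of $O(P)$ and $E(P)$; so I may assume without loss of generality that the given arc lies in $O(P)$, say $x_j\rightarrow x_i$ with $i<j$ both odd.

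The core of the argument is a propagation mechanism that manufactures new same-part arcs from $x_j\rightarrow x_i$ by completing it to a path of length exactly $k$ and invoking $k$-quasi-transitivity. Using Lemma~\ref{bipartite}(3), take a path $R$ of length $k-1$ from $x_i$ to a lower odd vertex $x_{i'}$; then $x_j\rightarrow x_i$ followed by $R$ is a path of length $k$ from $x_j$ to $x_{i'}$, so $\overline{x_jx_{i'}}$ is a new arc inside $O(P)$. Symmetrically, prepending to $x_j\rightarrow x_i$ a length-$(k-1)$ path from a higher odd vertex $x_{j'}$ to $x_j$ (again Lemma~\ref{bipartite}(3)) yields $\overline{x_{j'}x_i}$. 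To cross into $E(P)$, I would use the forward arc $x_{j-1}\rightarrow x_j$ (here $x_{j-1}$ is even) together with a path of length $k-2$ from $x_i$ to a lower even vertex $x_t$ furnished by Lemma~\ref{bipartite}(2): the concatenation $x_{j-1}\rightarrow x_j\rightarrow x_i$ followed by that path has length $2+(k-2)=k$, whence $\overline{x_{j-1}x_t}$ is an arc inside $E(P)$. Iterating these three moves, sliding the lower and upper indices and alternating parity, one reaches every pair in $O(P)$ and every pair in $E(P)$; combined with the complete bipartite adjacency of Lemma~\ref{bipartite}(1), this shows that $D[V(P)]$ is semicomplete, and the orientation claim follows from the no-forward-chord observation above.

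The main obstacle is that each concatenation must be a genuine path of length exactly $k$, i.e.\ its vertices must be distinct. The long paths supplied by Lemma~\ref{bipartite} zig-zag up to $x_{k+1}$ and may therefore revisit the few vertices already committed at the ends of the chain (typically $x_j$, or $x_{j-1}$). Handling this requires choosing, among the paths produced in the proof of Lemma~\ref{bipartite}, one that avoids the $O(1)$ forbidden vertices, and treating separately the boundary situations where the index gap is small or where the arc is incident with $x_{k+1}$ or $x_{k+2}$, since there the available path lengths are tight and the explicit constructions of Lemma~\ref{bipartite}(2),(3) must be used directly. One must also verify that the propagation genuinely exhausts all same-part pairs rather than stalling on a sub-family; this is ensured by the fact that the lower index can be driven down to the minimum and the upper index up to $x_{k+2}$ in each part.
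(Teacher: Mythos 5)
Your reductions (no forward chords by minimality of $P$, hence same\mbox{-}part adjacency means a backward arc, and the final orientation claim once semicompleteness is known) are fine, and the reversal argument correctly swaps $O(P)$ and $E(P)$. The gap is in the propagation mechanism itself: every one of your three moves concatenates the single arc $x_j\rightarrow x_i$ with a path of length $k-1$ (or $k-2$) supplied by Lemma~\ref{bipartite}, placed entirely \emph{after} or entirely \emph{before} that arc. A path of length $k-1$ in $D[V(P)]$ visits $k$ of the $k+3$ vertices, so it misses only three of them, and in all the constructions of Lemma~\ref{bipartite}(2),(3) the missed vertices are forced to be near the low end of $P$ or in $\{x_{k+2}\}$; the path necessarily sweeps through the segment containing $x_j$ and $x_{j-1}$. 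So the concatenations are not paths, and the problem is not a boundary nuisance that can be fixed by ``choosing one that avoids the $O(1)$ forbidden vertices'': for $k=5$ with the given arc $x_5\rightarrow x_3$ and target $x_{i'}=x_1$, the only guaranteed out\mbox{-}arcs of $x_3$ avoiding $x_5$ are $x_3\rightarrow x_4$ and $x_3\rightarrow x_0$, and from $x_4$ (resp.\ $x_0$) the only guaranteed continuation avoiding $x_5$ is directly to $x_1$; hence no path of length $4$ from $x_3$ to $x_1$ avoiding $x_5$ exists in the subdigraph known at that stage, and your first move cannot even start except when $j=k+2$.

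The paper's proof escapes this by building length-$k$ paths that contain the arc $x_jx_i$ in their \emph{interior} and whose two new endpoints lie outside the swept region: the path $P[x_{j+2}, x_{k+2}]P[x_{i+1}, x_j]x_iP[x_0, x_{i-2}]$ has length exactly $k$ (it omits only $x_{i-1}$ and $x_{j+1}$) and yields the new same\mbox{-}parity arc $x_{j+2}\rightarrow x_{i-2}$, widening the span on both sides simultaneously; two auxiliary claims handle the boundary and push this to $x_{k+2}\rightarrow x_1$ (resp.\ $x_{k+1}\rightarrow x_0$), after which all remaining same\mbox{-}parity arcs are obtained by explicit length-$k$ paths. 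To repair your argument you would need to replace your three moves by a construction of this ``straddling'' type; as written, the propagation does not produce even one new same\mbox{-}part arc in the generic case.
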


\begin{proof} Without loss of generality, assume that $i<j$. Thus, $x_j\rightarrow x_i$.
By Lemma \ref{bipartite}, every vertex of $E(P)$ is adjacent to every vertex of $O(P)$,
that is to say, for any $x_{\alpha}$ and $x_{\beta}$ such that $\alpha$ and $\beta$ have different parity,
we have $\overline{x_{\alpha}x_{\beta}}$, in particular, if $\alpha >\beta+1$, then $x_{\alpha}\rightarrow x_{\beta}$
by the minimality of $P$.

First we show that $x_{k+2}\rightarrow \{x_1, x_3, \ldots, x_k\}$.
We claim that for some $q\ge 3$ if $x_{k+2}\rightarrow x_{q}$,
then $x_{k+2}\rightarrow x_{q-2}$. In fact, the length of the path $x_{k+2}P[x_q, x_{k+1}]P[x_1, x_{q-2}]$ is $k$.
So $\overline{x_{k+2}x_{q-2}}$ and $x_{k+2}\rightarrow x_{q-2}$. Next we claim that for some $p\ge 3$
if $x_p\rightarrow x_1$, then $x_{p+2}\rightarrow x_1$ (if $x_{p+2}$ exists).
In fact, the length of the path $P[x_{p+2}, x_{k+2}]P[x_2, x_p]x_1$ is $k$ and
so $x_{p+2}\rightarrow x_1$. Suppose that $x_i, x_j\in O(P)$. If $j=k+2$ or $i=1$, then, by above two claims,
we can obtain that $x_{k+2}\rightarrow x_1$. Now assume that $j\le k$ and $i\ge 3$.
Note that the length of the path $P[x_{j+2}, x_{k+2}]P[x_{i+1}, x_j]x_iP[x_0, x_{i-2}]$ is $k$.
So $x_{j+2}\rightarrow x_{i-2}$. Repeating this way and using above two claims,
we can obtain $x_{k+2}\rightarrow x_1$. Suppose that $x_i, x_j\in E(P)$.
Analogously, we can obtain $x_{k+1}\rightarrow x_0$. Note that the length of the path $x_{k+2}P[x_4, x_{k+1}]x_0x_1$ is $k$.
So $x_{k+2}\rightarrow x_1$.
Next we show $x_{k+2}\rightarrow x_{k-p}$ for $0\le p\le k-3$ by induction on even $p$.
For $p=0$, the path $x_{k+2}P[x_1, x_k]$ implies $\overline{x_kx_{k+2}}$ and $x_{k+2}\rightarrow x_k$.
For the inductive step, let us suppose that $x_{k+2}\rightarrow x_{k-p}$ for even $p$ and
$0\le p\le k-5$. Then the path of $x_{k+2}P[x_{k-p}, x_{k+1}]P[x_1, x_{k-(p+2)}]$ implies
$\overline{x_{k+2}x_{k-(p+2)}}$ and $x_{k+2}\rightarrow x_{k-(p+2)}$. From now on, we have shown that $x_{k+2}\rightarrow \{x_1, x_3, \ldots, x_k\}$.

Now we show that for any $1\le t+1<s\le k+2$, $x_s\rightarrow x_t$.
It suffices to show that, for any $x_t, x_s\in O(P)$ or $x_t, x_s\in E(P)$,
there exists a path of length $k$ from $x_s$ to $x_t$ since $P$ is minimal.
Suppose that $x_s, x_t\in O(P)$. If $s=k+2$, then we are done. Assume $s\le k$. If $s-t=2$,
then the length of the path $P[x_s, x_{k+2}]P[x_1, x_t]$  is $k$. If $s-t\ge 4$,
then the length of the path $P[x_s, x_{k+2}]P[x_{t+2}, x_{s-1}]P[x_1, x_t]$  is $k$.
Suppose that $x_s, x_t\in E(P)$. First consider $s=k+1$.  If $t=0$,
then the length of the path $x_{k+1}x_{k+2}P[x_3, x_k]x_0$  is $k$. If $t\ge 2$,
then the length of the path $x_{k+1}x_{k+2}P[x_{t+1}, x_k]P[x_2, x_t]$ is $k$.
Now consider that $s\le k-1$. If $s-t=2$, then the length of the path $P[x_s, x_{k+1}]P[x_0, x_t]$  is $k$.
If $s-t\ge 4$,  then the length of the path $P[x_s, x_{k+1}]P[x_{t+2}, x_{s-1}]P[x_0, x_t]$ is $k$.

From now on, we have shown that $D[V(P)]$ is a semicomplete digraph and $x_s\rightarrow x_t$ for $1\le t+1<s\le k+2$.\end{proof}

According to Lemmas \ref{bipartite} and \ref{semicomplete}, we can easily obtain the following theorem.

\begin{theorem} \label{theorem} The digraph $D[V(P)]$ is either a semicomplete digraph or a semicomplete bipartite digraph.
\end{theorem}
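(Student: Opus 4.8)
The plan is to run a clean dichotomy on whether any arc appears inside one of the colour classes $O(P)$ or $E(P)$. The starting point is Lemma \ref{bipartite}(1), which I would invoke first: it guarantees that $D[V(P)]$ always contains, as a subdigraph, the semicomplete bipartite digraph with bipartition $(O(P),E(P))$. In particular every vertex of $O(P)$ is adjacent to every vertex of $E(P)$, so all ``cross-parity'' adjacencies are settled once and for all, and the only remaining question is whether some two vertices of the \emph{same} parity are adjacent.

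Next I would split into the two exhaustive cases produced by that question. In the first case, suppose there exist $x_i,x_j$ lying in a common class, that is $x_i,x_j\in E(P)$ or $x_i,x_j\in O(P)$, with $\overline{x_ix_j}$. Then Lemma \ref{semicomplete} applies verbatim and gives that $D[V(P)]$ is a semicomplete digraph, which is the first permitted conclusion. In the second case, suppose no two vertices of the same parity are adjacent, so that $D[V(P)]$ has no arc inside $O(P)$ and no arc inside $E(P)$. Combined with the cross-parity completeness supplied by Lemma \ref{bipartite}(1), this says exactly that $D[V(P)]$ is bipartite with parts $O(P)$ and $E(P)$ and that any two vertices from different parts are adjacent; that is, $D[V(P)]$ is a semicomplete bipartite digraph, the second permitted conclusion. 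Since the two cases are exhaustive, the theorem follows.

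I do not expect a genuine obstacle here, since the real work has already been carried out in Lemmas \ref{bipartite} and \ref{semicomplete}; the statement is essentially a bookkeeping combination of the two. The only point meriting a line of care is confirming that the second case truly delivers the bipartite structure rather than a degenerate object: one must note that both classes are nonempty (indeed, as $k$ is odd and $k+2\ge 7$, we have $x_{k+2}\in O(P)$ and $x_{k+1}\in E(P)$, and each class contains several vertices) and that ``no same-parity arc'' is precisely the defining condition for $D[V(P)]$ to be bipartite with bipartition $(O(P),E(P))$. With those remarks in place, the completeness between classes from Lemma \ref{bipartite}(1) finishes the semicomplete bipartite conclusion, and the proof is complete.
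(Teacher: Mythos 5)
Your argument is correct and is exactly the paper's intended proof: the paper derives Theorem \ref{theorem} directly from Lemmas \ref{bipartite} and \ref{semicomplete} via the same dichotomy on whether two same-parity vertices of $P$ are adjacent. No further comment is needed.
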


In the rest of the paper, we study the structure of $D[V(D)\setminus V(P)]$.

\begin{lemma}\label{outsemicomplete}  Let $H$ be a digraph and $u, v\in V(H)$ such that $d(u,v)=n$ with $n\ge 4$ in $H$. Let $Q=x_0x_1\ldots x_n$ be a shortest $(u,v)$-path in $H$.
If $H[V(Q)]$ is a semicomplete digraph, then, for any  $x_i, x_j\in V(Q)$ with $0\le i<j\le n$,
there exists a path of length $p$ from $x_j$ to $x_i$ with $p\in \{2, 3,\ldots, n-1\}$ in $H[V(Q)]$.
\end{lemma}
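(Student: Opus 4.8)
The plan is first to pin down exactly which arcs of $H[V(Q)]$ are forced by the hypotheses, and then to build the required paths by hand inside this forced subdigraph. Since $Q$ is a shortest $(u,v)$-path with $d(u,v)=n$, every subpath of $Q$ is shortest, so $d(x_a,x_c)=c-a$ for all $0\le a<c\le n$. Hence whenever $c\ge a+2$ there is no arc $x_a\rightarrow x_c$ (it would give $d(x_a,x_c)=1$), and since $H[V(Q)]$ is semicomplete we must have $x_c\rightarrow x_a$. Together with the arcs $x_a\rightarrow x_{a+1}$ of $Q$, the forced arcs are precisely $x_a\rightarrow x_{a+1}$ for $0\le a\le n-1$ and $x_c\rightarrow x_a$ for all $c\ge a+2$. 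I will use only these arcs; the single thing unavailable is a short backward arc $x_{a+1}\rightarrow x_a$, and this one gap is what produces the boundary cases below.

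This reduces the statement to a routing problem on the indices $\{0,1,\dots,n\}$ whose only permitted moves are $+1$ (a forward arc) and a jump backwards by at least $2$ (a long backward arc): I must realise every length $p\in\{2,\dots,n-1\}$ for a path from $j$ down to $i$. The vertices $x_j,x_i$ cut $V(Q)$ into three blocks of consecutive indices, a \emph{top} block $x_j,\dots,x_n$, a \emph{middle} block $x_{i+1},\dots,x_{j-1}$, and a \emph{bottom} block $x_0,\dots,x_i$; each block is traversed by a forward run, consecutive blocks are linked by one backward jump, and varying how much of each block is used will vary the length.

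For the short and medium lengths I take a top run, one jump, and a bottom run, namely $x_j\rightarrow\cdots\rightarrow x_{j+s}\rightarrow x_c\rightarrow\cdots\rightarrow x_i$ with $1\le s\le n-j$ and $0\le c\le i$ (when $j=n$ the top run is empty and one jumps directly from $x_n$). This has length $s+1+(i-c)$, and letting $s,c$ vary independently realises every value in $\{2,\dots,n-j+i+1\}$. For the long lengths, which arise only when $j\ge i+3$, I splice in a middle run: $x_j\rightarrow\cdots\rightarrow x_{j+s}\rightarrow x_{i+1}\rightarrow\cdots\rightarrow x_{i+r}\rightarrow x_0\rightarrow\cdots\rightarrow x_i$ with $0\le s\le n-j$ and $1\le r\le j-i-1$, of length $s+r+i+1$, realising every value in $\{i+2,\dots,n-1\}$. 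As $i+2\le(n-j+i+1)+1$, the two families together cover all of $\{2,\dots,n-1\}$.

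I expect the main obstacle to lie not in the generic construction but in the boundary behaviour of the connecting jumps: each connector must be a genuine backward-by-at-least-$2$ arc, which degenerates exactly when a block collapses or an endpoint is extremal. The delicate configurations are $j\in\{i+1,i+2\}$ (where the middle is too small to use and everything must be absorbed by the first family) and $(i,j)=(0,n)$ (where the medium-length family is empty); in the latter the value $p=2$ has to be supplied separately, which the hypothesis $n\ge4$ permits, e.g.\ $x_n\rightarrow x_2\rightarrow x_0$. Checking that the three blocks stay pairwise disjoint along each path, so that each construction is genuinely a path, is routine from $i<j$.
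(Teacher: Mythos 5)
Your proposal is correct, but it takes a genuinely different route from the paper. The paper argues by induction on $n$: for $1\le j-i\le n-1$ both vertices lie on one of the two sub-shortest-paths $x_0\ldots x_{n-1}$ or $x_1\ldots x_n$, the inductive hypothesis supplies all lengths $p\in\{2,\ldots,n-2\}$, and only the single top length $p=n-1$ (plus the extremal case $j-i=n$) is built by an explicit three-block concatenation of the same shape as yours; the base case $n=4$ is left as a check. You instead make the forced-arc structure explicit ($x_a\rightarrow x_{a+1}$ and $x_c\rightarrow x_a$ for $c\ge a+2$ are the only arcs available, with no short backward arc $x_{a+1}\rightarrow x_a$) and give a uniform two-parameter family of top/middle/bottom concatenations realizing every length directly, with no induction. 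What your approach buys is a self-contained, quantifier-explicit construction with no unverified base case and a clear account of exactly which boundary configurations are delicate; what the paper's induction buys is brevity, since only one length per step needs an explicit path. One small imprecision in your write-up: in the second family the connector $x_{i+r}\rightarrow x_0$ degenerates whenever $i=0$ and $r=1$ (not only when $(i,j)=(0,n)$), but that member is only ever needed for the single value $p=2$ with $(i,j)=(0,n)$, which you correctly supply separately as $x_n\rightarrow x_2\rightarrow x_0$; for $i=0$ and $j<n$ the first family already provides $p=2$, so no gap results.
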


\begin{proof}  We proof the result by induction on $n$. For $n=4$, it is not difficult
to check that the result is true. Suppose $n\ge 5$. Assume $j-i=n$. It must be $j=n$ and $i=0$.
Then the length of the path $x_nP[x_2, x_p]x_0$ is $p$, with $p\in \{2,3,\ldots, n-1\}$.
Now assume $1\le j-i\le n-1$. Then $x_i, x_j\in V(H[x_0, x_1,\ldots, x_{n-1}])$ or $x_i, x_j\in V(H[x_1, x_2,\ldots, x_n])$.
Without loss of generality, assume that $x_i, x_j\in V(H[x_0, x_1,\ldots, x_{n-1}])$.
By induction, there exists a path of length $p$ from $x_j$ to $x_i$ with $p\in\{2,3,\ldots, n-2\}$.
Now we only need to show that there exists a path of length $n-1$ from $x_j$ to $x_i$.
If $j-i=1$, then $P[x_j, x_{n-1}]P[x_0, x_i]$ is the desired path. If $j-i=2$,
then $P[x_j, x_n]P[x_0, x_i]$ is the desired path. If $3\le j-i\le n-1$,
then $P[x_j, x_n]P[x_{i+2}, x_{j-1}]P[x_0,x_i]$ is the desired path.
\end{proof}

By Lemma \ref{outsemicomplete}, we can obtain the following lemma.

\begin{lemma}\label{semicompleteadj}   Suppose that $D[V(P)]$ is a semicomplete digraph.
For any $x\in V(D)\setminus V(P)$ and $x_i\in V(P)$, if $x\rightarrow x_i$,
then $x$ and every vertex of $\{x_0, x_1, \ldots, x_{i-1}\}$ are adjacent;
if $x_i\rightarrow x$, then $x$ and every vertex of $\{x_{i+1}, x_{i+2}, \ldots, x_{k+2}\}$ are adjacent.
\end{lemma}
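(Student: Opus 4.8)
The plan is to reduce both implications to a single application of $k$-quasi-transitivity by exhibiting, in each case, a path of length exactly $k$ whose two endpoints are $x$ and the target vertex. Since $D[V(P)]$ is semicomplete and $P=x_0x_1\ldots x_{k+2}$ is a shortest $(u,v)$-path of length $k+2\ge 7$, Lemma \ref{outsemicomplete} (applied with $H=D$, $Q=P$, $n=k+2$) supplies, for any two vertices $x_a,x_b\in V(P)$ with $a<b$, a path from $x_b$ to $x_a$ inside $D[V(P)]$ of every prescribed length $p\in\{2,3,\ldots,k+1\}$. The length I will always request is $p=k-1$, which lies in this range precisely because $k\ge 5$ forces $2\le k-1\le k+1$.

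First I would treat the implication that $x\rightarrow x_i$ makes $x$ adjacent to every $x_j$ with $j\le i-1$. Fix such a $j$, so $j<i$. By the previous paragraph there is a path $R$ of length $k-1$ from $x_i$ to $x_j$ inside $D[V(P)]$. Because $x\notin V(P)$, the vertex $x$ does not lie on $R$, so prefixing the arc $x\rightarrow x_i$ yields a genuine path $x\,x_i\,R$ from $x$ to $x_j$ of length $1+(k-1)=k$. The $k$-quasi-transitivity of $D$ then forces $\overline{xx_j}$, as required. (If $i=0$ the set $\{x_0,\ldots,x_{i-1}\}$ is empty and there is nothing to prove.)

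The second implication, that $x_i\rightarrow x$ makes $x$ adjacent to every $x_j$ with $j\ge i+1$, is handled by the mirror-image construction. Fixing $j>i$, Lemma \ref{outsemicomplete} gives a path $R'$ of length $k-1$ from $x_j$ to $x_i$ in $D[V(P)]$; appending the arc $x_i\rightarrow x$ produces a path $R'x$ from $x_j$ to $x$ of length $k$, whence $\overline{xx_j}$ again follows. More economically, since $\overleftarrow{D}$ is $k$-quasi-transitive by Remark \ref{remark}, the reversal of $P$ is a shortest path in $\overleftarrow{D}$, and $\overleftarrow{D}[V(P)]$ is still semicomplete, the relation $x_i\rightarrow x$ in $D$ becomes $x\rightarrow x_i$ in $\overleftarrow{D}$; the second statement is then literally the first statement read in $\overleftarrow{D}$, with the index ordering reversed so that the vertices preceding $x_i$ are exactly $x_{k+2},x_{k+1},\ldots,x_{i+1}$.

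I do not anticipate a genuine obstacle here: the only points requiring care are checking that the requested length $k-1$ falls inside the admissible interval $\{2,\ldots,k+1\}$ (which is where the hypothesis $k\ge 5$ enters) and observing that the concatenated walk is automatically a path because $x$ is external to $P$. The entire argument rests on Lemma \ref{outsemicomplete} converting the semicompleteness of $D[V(P)]$ into the existence of internal paths of arbitrary intermediate length.
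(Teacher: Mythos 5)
Your proposal is correct and follows essentially the same route as the paper's own proof: invoke Lemma \ref{outsemicomplete} with $n=k+2$ to obtain a path of length $k-1$ inside $D[V(P)]$ between $x_i$ and $x_j$ (in the appropriate direction), prepend or append the arc joining $x$ to $x_i$ to get a path of length $k$, and conclude $\overline{xx_j}$ by $k$-quasi-transitivity. The extra checks you record (that $k-1$ lies in $\{2,\ldots,k+1\}$ and that the concatenation is a genuine path since $x\notin V(P)$) are left implicit in the paper but are exactly the right points to verify.
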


\begin{proof} If  $x\rightarrow x_i$, then for any $x_j\in \{x_0, x_1, \ldots, x_{i-1}\}$,
by Lemma \ref{outsemicomplete}, there exists a  path $Q$ of length $k-1$ from $x_i$ to $x_j$.
Then the path $xQ$ implies $\overline{xx_j}$. If  $x_i\rightarrow x$,
then for any $x_j\in \{x_{i+1}, x_{i+2}, \ldots, x_{k+2}\}$, by Lemma \ref{outsemicomplete},
there exists a  path $R$ of length $k-1$ from $x_j$ to $x_i$. Then the path $Rx$ implies $\overline{xx_j}$.
\end{proof}

\begin{lemma}\cite{wang2}\label{adjacent} Let $k$ be an integer with $k\ge 2$ and
$D$ be a strong $k$-quasi-transitive digraph. Suppose that $C=x_0x_1\ldots x_{n-1}x_0$
is a cycle of length $n$ with $n\geq k$ in $D$. Then for any $x\in V(D)\setminus V(C)$, $x$ and $V(C)$ are adjacent.
\end{lemma}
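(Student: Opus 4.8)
The plan is to argue by contradiction, exploiting the hypothesis $n\ge k$ to splice a shortest path into the cycle and so manufacture a path of length exactly $k$ to which $k$-quasi-transitivity can be applied. So suppose some $x\in V(D)\setminus V(C)$ is not adjacent to $V(C)$. Since $D$ is strong, there is a shortest path $W=q_0q_1\cdots q_s$ from $x$ to $V(C)$, with $q_0=x$, $q_s\in V(C)$ and $q_0,\ldots,q_{s-1}\notin V(C)$; because $x$ is not adjacent to $V(C)$ we have $s\ge 2$.

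The engine of the argument is the following observation. Relabel $C$ so that $q_s=x_0$. For every $j$ with $1\le j\le\min\{s,k\}$ the walk
\[
q_{s-j}\,q_{s-j+1}\cdots q_{s-1}\,x_0\,x_1\cdots x_{k-j}
\]
is in fact a path of length exactly $k$: its first $j$ arcs run along $W$ to $x_0$ and the remaining $k-j$ run along $C$, and since $k-j\le k-1\le n-1$ the vertices $x_0,\ldots,x_{k-j}$ are pairwise distinct and disjoint from $\{q_{s-j},\ldots,q_{s-1}\}$. Hence, by $k$-quasi-transitivity, $q_{s-j}$ and $x_{k-j}\in V(C)$ are adjacent, so each of $q_{s-1},q_{s-2},\ldots,q_{s-\min\{s,k\}}$ is adjacent to $V(C)$. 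If $s\le k$ we may take $j=s$ and conclude that $q_0=x$ is adjacent to $V(C)$, a contradiction; therefore $s\ge k+1$. Applying the same argument in $\overleftarrow{D}$, which is again strong and $k$-quasi-transitive by Remark \ref{remark} and in which a shortest path from $x$ to $V(C)$ corresponds to a shortest path from $V(C)$ to $x$ in $D$, I obtain that there is likewise no path of length at most $k$ from $V(C)$ to $x$.

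It remains to rule out the situation in which $x$ lies at distance at least $k+1$ from $V(C)$ in both directions, and this is the step I expect to be the main obstacle. Here I would combine minimality with the arcs supplied by the engine. If a forward chord $q_i\rightarrow q_{i+k}$ existed on $W$, then $q_0\cdots q_i\,q_{i+k}\cdots q_s$ would be a path from $x$ to $V(C)$ of length $s-k+1<s$, contradicting the choice of $W$; hence every length-$k$ chord of $W$ points backward, $q_{i+k}\rightarrow q_i$. The same minimality applied to the adjacencies between $q_{s-j}$ and $x_{k-j}$ produced above forces $x_{k-j}\rightarrow q_{s-j}$ for $2\le j\le k$, so $V(C)$ dominates the entire block $q_{s-k},\ldots,q_{s-2}$. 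The plan is then to start on $V(C)$, step into this block, and follow the backward chords $q_{i+k}\rightarrow q_i$ repeatedly, using the freedom of a block of $k-1$ consecutive entry points together with the cycle $C$ (whose length $n\ge k$ lets one adjust the residue of the landing index) to reach $x=q_0$ in few steps, contradicting the lower bound $k+1$ on the distance from $V(C)$ to $x$. The delicate case is when $k$ divides $s-1$, where the backward chords alone just miss the index $0$; closing this gap is exactly where the hypothesis $n\ge k$ must be invoked, and it is the crux of the argument.
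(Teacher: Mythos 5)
The paper does not prove this lemma; it is quoted from \cite{wang2}. Judged on its own terms, your write-up establishes the preliminary structure correctly: the splicing of the tail of a shortest path $W=q_0\cdots q_s$ from $x$ to $V(C)$ into the cycle does produce paths of length exactly $k$, giving $s\ge k+1$, the backward chords $q_{i+k}\rightarrow q_i$, the dominations $x_{k-j}\rightarrow q_{s-j}$ for $2\le j\le k$, and (via the converse digraph) $d(V(C),x)\ge k+1$. But the proof is not finished, and the part that is missing is the part that actually proves the lemma. You say so yourself (``The plan is then to\ldots'', ``closing this gap is \ldots the crux of the argument''), so this is a genuine gap, not a quibble.

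Moreover, the plan as sketched does not work even outside the ``delicate'' case. Following backward chords from an entry point $q_{s-j}$ down to $q_0$ produces a path from $V(C)$ to $x$ of length $1+(s-j)/k$, which grows linearly in $s$; for $s>k^2-k+2$ this exceeds $k$, so it does not contradict the lower bound $d(V(C),x)\ge k+1$. The usable move is different: keep every constructed path at length exactly $k$ (padding with at most $k-1$ arcs of $C$, which is where $n\ge k$ enters, or truncating the descent after $k-1$ chords) and apply $k$-quasi-transitivity together with the minimality of $W$ to propagate the set of indices $i$ with $V(C)\rightarrow q_i$ downward, rather than trying to beat a distance bound in one shot. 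Even after that repair, the pure-descent paths only reach indices congruent to $s-j\pmod k$ for $j\in\{2,\dots,k\}$, which misses index $0$ exactly when $s\equiv 1\pmod k$; handling that residue requires mixing forward steps of $W$ (or the undetermined adjacency $\overline{q_{s-1}x_{k-1}}$) into the length-$k$ paths, and none of this is carried out. Until the case $s\equiv 1\pmod k$ and the length bookkeeping for large $s$ are actually written down, the argument does not establish the lemma.
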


\begin{lemma} \cite{cruz2013d}\label{cruz2013d} Let $k$ be an integer with $k\ge 2$ and
$D$ be a $k$-quasi-transitive digraph. Suppose that $C=x_0x_1\ldots x_{n-1}x_0$ is a cycle of
length $n$ with $n\geq k$ in $D$.  For any $x\in V(D)\setminus V(C)$,
if $x\rightarrow x_i$ and $x\Rightarrow V(C)$, then $x\rightarrow x_{i+(k-1)}$;
if $x_i\rightarrow x$ and $V(C)\Rightarrow x$, then $x_{i-(k-1)}\rightarrow x$,
where the subscripts are taken modulo $n$.
\end{lemma}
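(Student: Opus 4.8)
The plan is to exploit $k$-quasi-transitivity directly: for each of the two claims I would assemble a single path of length exactly $k$ whose endpoints are $x$ and the target cycle vertex, and then use the one-sided domination hypothesis to force the resulting adjacency into the desired direction.

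For the first statement, assume $x\rightarrow x_i$ and $x\Rightarrow V(C)$. First I would verify that the sequence $x\,x_i\,x_{i+1}\ldots x_{i+(k-1)}$ is genuinely a path: since $n\ge k$, the $k$ consecutive vertices $x_i,x_{i+1},\ldots,x_{i+(k-1)}$ are pairwise distinct on $C$, and $x\notin V(C)$, so no vertex repeats. This path has exactly $k$ arcs, namely the arc $x\rightarrow x_i$ together with the $k-1$ cycle arcs $x_i\rightarrow x_{i+1}\rightarrow\cdots\rightarrow x_{i+(k-1)}$. By $k$-quasi-transitivity its endpoints are adjacent, so $\overline{x\,x_{i+(k-1)}}$. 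The hypothesis $x\Rightarrow V(C)$ forbids the arc $x_{i+(k-1)}\rightarrow x$, hence $x\rightarrow x_{i+(k-1)}$, as required.

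The second statement is the mirror image, and I would obtain it either by running the same construction backwards or, more cleanly, by passing to the converse digraph $\overleftarrow{D}$, which is again $k$-quasi-transitive by Remark~\ref{remark}. Concretely, assuming $x_i\rightarrow x$ and $V(C)\Rightarrow x$, the sequence $x_{i-(k-1)}\,x_{i-(k-2)}\ldots x_{i-1}\,x_i\,x$ is a path of length $k$, formed by the $k-1$ cycle arcs followed by $x_i\rightarrow x$; it is again a genuine path precisely because $n\ge k$. Quasi-transitivity yields $\overline{x_{i-(k-1)}\,x}$, and $V(C)\Rightarrow x$ rules out $x\rightarrow x_{i-(k-1)}$, leaving $x_{i-(k-1)}\rightarrow x$.

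I do not anticipate a serious obstacle here: the only points requiring care are the index bookkeeping modulo $n$, so that the listed cycle vertices are genuinely distinct, which is exactly where the hypothesis $n\ge k$ enters, and the confirmation that each auxiliary walk has length exactly $k$ rather than $k\pm 1$. The real content is the single observation that $k-1$ consecutive cycle arcs together with the one arc joining $x$ to $C$ assemble into a length-$k$ path, after which $k$-quasi-transitivity combined with the one-directional domination hypothesis does all the remaining work.
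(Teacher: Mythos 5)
Your argument is correct and is the standard proof of this fact: the single length-$k$ path obtained by prepending the arc $x\rightarrow x_i$ to $k-1$ consecutive cycle arcs, followed by $k$-quasi-transitivity and the one-sided domination hypothesis, is exactly what is needed, and your checks (distinctness of the $k$ consecutive cycle vertices via $n\ge k$, the arc count, and the converse-digraph reduction for the second claim) are the right ones. The paper itself quotes this lemma from the literature without proof, so there is no internal argument to compare against, but your proof is complete as written.
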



Note that  $x_{k+2}\rightarrow x_0$ and so $P$ contains a cycle $x_0x_1\ldots x_{k+2}x_0$ of length $k+3$.
Combining this with Lemma \ref{adjacent}, every vertex of $V(D)\setminus V(P)$  is adjacent to $V(P)$.
Hence we can divide $V(D)\setminus V(P)$ into three sets: $I=\{x\in V(D)\setminus V(P):   x\Rightarrow V(P)\}$,
$W=\{x\in V(D)\setminus V(P):  V(P)\Rightarrow x\}$ and
$B=V(D)\setminus (V(P)\cup W\cup I)$.
One of $I$, $W$ and $B$ may be empty.

\begin{lemma}\label{Icocbc}  For any $x\in V(D)\setminus V(P)$, the following hold:
\begin{description}
\item [(1)] Suppose $x\in I$.  If there exists a vertex $x_i\in E(P)$ such that $x\rightarrow  x_i$, then $x\mapsto E(P)$;
if there exists a vertex $x_i\in O(P)$ such that $x\rightarrow x_i$, then $x\mapsto O(P)$.

\item[(2)] Suppose $x\in W$.  If there exists a vertex $x_i\in E(P)$ such that $x_i\rightarrow  x$, then $E(P)\mapsto x$;
if there exists a vertex $x_i\in O(P)$ such that $x_i\rightarrow x$, then $O(P)\mapsto x$.

\item[(3)] Suppose $x\in B$.  If $x$ and $O(P)$ are adjacent, then either $x$ and every vertex of $O(P)$ are adjacent
or there exist two vertices $x_s, x_t\in O(P)$ with $3\le t<s\le k$ such that
$\{x_s, x_{s+2}, \ldots, x_{k+2}\}\mapsto x\mapsto \{x_1, x_3, \ldots, x_t\}$;
If $x$ and $E(P)$ are adjacent, then either $x$ and every vertex of $E(P)$ are adjacent
or there exist two vertices $x_s,x_t \in E(P)$ with $2\le t<s\le k-1$
such that $\{x_s, x_{s+2}, \ldots, x_{k+1}\}\mapsto x\mapsto \{x_0, x_2, \ldots, x_t\}$.
\end{description}
\end{lemma}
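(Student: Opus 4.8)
The plan is to treat the three sets $I$, $W$, $B$ separately, reading the domination patterns off the path structure of Lemma \ref{bipartite} together with the cycle $C=x_0x_1\ldots x_{k+2}x_0$ of length $k+3\ (\ge k)$ and Lemma \ref{cruz2013d}. Since $\overleftarrow{D}$ is again $k$-quasi-transitive (Remark \ref{remark}) and reversing $P$ interchanges $I$ with $W$ and $O(P)$ with $E(P)$, statement (2) is exactly statement (1) read in $\overleftarrow{D}$; so I would prove (1) and obtain (2) for free.

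For (1), fix $x\in I$, so that $x\Rightarrow V(P)$ and hence every adjacency between $x$ and $V(P)$ is an arc out of $x$; in particular $E(P)\Rightarrow x$ and $O(P)\Rightarrow x$, and it suffices to prove $x\to E(P)$ (resp.\ $x\to O(P)$) from a single out-arc into the class. Two monotonicity tools are available: Lemma \ref{bipartite}(3) gives, for same-parity $x_s,x_t$ with $s>t$, that $x\to x_s$ forces $\overline{xx_t}$, i.e.\ (as $x\in I$) $x\to x_t$; and Lemma \ref{cruz2013d} applied to $C$ gives $x\to x_i\Rightarrow x\to x_{i+(k-1)}$ (indices mod $k+3$), which preserves index parity because $k+3$ and $k-1$ are both even. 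I would combine one ``jump'' with repeated ``downward'' steps to climb to the top vertex of the class and then descend: starting from $x\to x_i$ with $x_i\in E(P)$, the downward steps (after one jump $x\to x_{k-1}$ when $i=0$) yield $x\to x_2$, the jump then gives $x\to x_{2+(k-1)}=x_{k+1}$, and a final round of downward steps gives $x\to x_j$ for every even $j\le k+1$, i.e.\ $x\mapsto E(P)$; the odd case is identical with $x_3$ and $x_{k+2}=x_{3+(k-1)}$ in place of $x_2$ and $x_{k+1}$.

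For (3), fix $x\in B$ and suppose $x$ is adjacent to $O(P)$. The engine is again Lemma \ref{bipartite}(3): for odd $p>q$, $x\to x_p$ forces $\overline{xx_q}$ (with $x\to x_q$ once $p\ge q+4$) while $x_q\to x$ forces $\overline{xx_p}$ (with $x_p\to x$ once $p\ge q+4$). The first key step is to show that any local irregularity forces the first alternative: if some odd $x_p$ lies on a digon with $x$, or if $x\to x_p$ while $x_{p-2}\to x$ (call this an \emph{inversion}), then the two implications above jointly yield $\overline{xx_r}$ for every odd $r$, so $x$ is adjacent to every vertex of $O(P)$. In the remaining case there are no digons and no inversions, whence each adjacent odd vertex is purely an out-neighbour or purely an in-neighbour; the gap-$4$ closures make the out-neighbours downward closed and the in-neighbours upward closed, and the absence of inversions upgrades these to an initial segment $\{x_1,\ldots,x_t\}$ and a final segment $\{x_s,\ldots,x_{k+2}\}$ with $t<s$, the intermediate odd vertices being non-adjacent to $x$. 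This is precisely $\{x_s,x_{s+2},\ldots,x_{k+2}\}\mapsto x\mapsto\{x_1,x_3,\ldots,x_t\}$, and the $E(P)$ statement follows verbatim with $k+1$ in place of $k+2$.

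The main obstacle is the last mile of (3): verifying the exact ranges $3\le t<s\le k$, i.e.\ ruling out degenerate segments (for instance $t=1$, or $s=k+2$ so that only $x_{k+2}$ dominates $x$, or one of the two segments empty). These boundary facts do not follow from Lemma \ref{bipartite}(3) alone; I expect to need the explicit dominations of Proposition \ref{back} ($x_{k+2}\to\{x_1,x_3,\ldots,x_k\}$ and $x_{k+1}\to\{x_0,x_2,\ldots,x_{k-1}\}$), the hypothesis $x\in B$ (which guarantees arcs into $x$ and out of $x$ with $V(P)$), and the minimality of $P$, combined so as to force both $x_1,x_3$ into the out-segment and $x_k,x_{k+2}$ into the in-segment---or else to produce an $(u,v)$-path shorter than $k+2$, contradicting $d(u,v)=k+2$. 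Carrying out this case analysis cleanly, and uniformly in the residue of $k$ modulo $4$, is the delicate part of the argument.
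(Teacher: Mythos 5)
Your parts (1) and (2) are sound. For (1) you interleave one application of Lemma \ref{cruz2013d} (the jump by $k-1$ on the cycle $x_0x_1\ldots x_{k+2}x_0$) with the same-parity downward closure from Lemma \ref{bipartite}(3); the paper instead iterates Lemma \ref{cruz2013d} alone, computes $\gcd(k-1,k+3)\in\{2,4\}$, and in the case $d=4$ patches the missed residues with explicit length-$k$ paths. Your route avoids that case split and is a mild simplification of the same idea. Part (2) by passing to $\overleftarrow{D}$ is exactly what the paper does. For (3), your reduction to ``digon or inversion $\Rightarrow$ adjacent to all of $O(P)$, otherwise a downward-closed out-segment and an upward-closed in-segment separated by non-neighbours'' is the contrapositive form of the paper's argument (which starts from a non-neighbour $x_{n_0}$ and splits $O(P)$ around it via Lemma \ref{bipartite}(3)); these are equivalent.

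The genuine gap is the one you flag yourself: you never establish $3\le t$ and $s\le k$, and your stated plan for doing so (Proposition \ref{back}, minimality of $P$, a case analysis ``uniform in the residue of $k$ modulo $4$'') is not how it goes and would likely lead you astray. The paper closes this with three direct applications of $k$-quasi-transitivity, none depending on $k\bmod 4$. First, since the out-segment is nonempty one has $x\to x_1$, so the path $xx_1x_2\ldots x_k$ of length $k$ gives $\overline{xx_k}$. If $x\to x_k$ held, then $t\ge k$ forces the non-neighbour to be $x_{n_0}=x_{k+2}$; but then $x\to x_3$ and the length-$k$ path $xx_3x_4\ldots x_{k+2}$ gives $\overline{xx_{k+2}}$, a contradiction. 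Hence $x_k\mapsto x$, which yields $s\le k$ and, via Lemma \ref{bipartite}(3), $x_{k+2}\mapsto x$. Finally the length-$k$ path $x_3x_4\ldots x_{k+2}x$ gives $\overline{xx_3}$, so the non-neighbour satisfies $n_0>3$ and therefore $x\mapsto x_3$, i.e.\ $t\ge 3$. Without these three paths your second alternative could a priori degenerate (e.g.\ $t=1$, or an empty out-segment when all arcs between $x$ and $O(P)$ point into $x$ --- a case you also do not reduce away, whereas the paper disposes of it by passing to the converse digraph). As written, the delicate part of (3) is identified but not proved.
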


\begin{proof} (1) First assume that there exists a vertex $x_i\in E(P)$ such that $x\rightarrow  x_i$.
Recall that $x_0x_1\ldots x_{k+2}x_0$ is a cycle of length $k+3$. From now on, the subscripts are taken modulo $k+3$. By Lemma \ref{cruz2013d},
$x\rightarrow x_{i+(k-1)}$.  Without loss of generality, assume $i=0$. Denote $U=\{m(k-1)\in \mathbb{Z}_{k+3}, m\in \mathbb{Z}\}$.
Repeating using Lemma \ref{cruz2013d}, we  can obtain $x\rightarrow  U$. It is easy to see that
$U=\{md: m\in \{0, 1,2, \ldots, \frac{k+3}{d}-1\}\}$, where $d=\gcd(k-1, k+3)$ (here, $\gcd$ means the greatest common divisor.)
Note that $d\ge 2$ as $k$ is odd. So there exist two integers $a$ and $b$ such that $k-1=da$ and $k+3=db$.
Thus, $4=(k+3)-(k-1)=db-da=d(b-a)$.  From this, we know that $d=2$ or $d=4$.

If $d=2$, then $U=\{0, 2, \ldots, k+1\}$. So $x\rightarrow E(P)$ and furthermore $x\mapsto E(P)$ by the definition of $I$. Now assume $d=4$. It is easy to see that
$U=\{0, 4, \ldots, k-1\}$. So $x\mapsto \{x_0, x_4, \ldots, x_{k-1}\}$. Let $x_j\in \{x_2, x_6, \ldots, x_{k-3}\}$ be arbitrary.
Note that $x\mapsto x_{j+2}$. The path $xC[x_{j+2}, x_{k+1}]C[x_1, x_j]$ implies $\overline{xx_j}$ and so $x\mapsto x_j$.
In addition, $xC[x_2, x_{k+1}]$ implies that $\overline{xx_{k+1}}$ and so $x\rightarrow x_{k+1}$. Analogously, if there exists $x_i\in O(P)$
such that $x\rightarrow x_i$, then we can obtain  $x\mapsto O(P)$.

(2) By Remark \ref{remark}, considering the converse of $D$,  the  statement is obvious.

(3) By the definition of $B$, we have $(x, V(P))\neq\emptyset$ and $(V(P), x)\neq\emptyset$. First assume that $x$ and $O(P)$
are adjacent. Without loss of generality, assume that $(x, O(P))\neq\emptyset$, otherwise consider the converse of $D$.
If $x$ and every vertex of $O(P)$ are adjacent, then we are done. Now assume that there exists $x_{n_0}\in O(P)$ such that
$x$ and $x_{n_0}$ are not adjacent. By Lemma \ref{bipartite}(3), $x\Rightarrow \{x_1, x_3, \ldots, x_{n_0-2}\}$
and $\{x_{n_0+2}, x_{n_0+4}, \ldots, x_{k+2}\}\Rightarrow x$. From this with $(x, O(P))\neq\emptyset$, we have $n_0\ge 3$.
Take $t=\max \{i: x\rightarrow x_i \mbox{\ and\  } x_i\in O(P)\}$. Then $1\le t<n_0$. By Lemma \ref{bipartite}(3),
$x$ and every vertex $\{x_1, x_3, \ldots, x_t\}$ are adjacent and furthermore $x\mapsto \{x_1, x_3, \ldots, x_t\}$.
By Lemma \ref{cruz2013d} and $x\rightarrow x_1$, we have $\overline{xx_k}$. If $x\rightarrow x_k$,
then $t\ge k$ and so $n_0=k+2$. But $x\rightarrow x_3$ and Lemma \ref{cruz2013d} implies $\overline{xx_{k+2}}$,
a contradiction. Thus $x_k\mapsto x$.  This together with Lemma \ref{bipartite}(3) implies $\overline{xx_{k+2}}$
and moreover $x_{k+2}\mapsto x$. Take $s=\min \{i: x_i\rightarrow x \mbox{\ and\  } x_i\in O(P)\}$.
Thus $n_0<s\le k$. By Lemma \ref{bipartite}(3), $x$ and every vertex of $\{x_s, x_{s+2}, \ldots, x_{k+2}\}$ are adjacent and
furthermore $\{x_s, x_{s+2}, \ldots, x_{k+2}\}\mapsto x$. Then $x_3x_4\ldots x_{k+2}x$ implies that $\overline{xx_3}$ and so $3<n_0$. This also implies $t\geq 3$. The proof is similar to the above argument when $x$ and $E(P)$ are adjacent. So we omit it.
\end{proof}


\begin{lemma} \label{Btosemi} If $D[V(P)]$ is a semicomplete digraph, then for any $x\in B$,
either $x$ and every vertex of $V(P)$ are adjacent or there exist two vertices $x_t, x_s\in V(P)$
with $4\le t+1<s\le k-1$ such that  $\{x_s, \ldots,  x_{k+2}\}\mapsto x\mapsto \{x_0,\ldots, x_t\}$. \end{lemma}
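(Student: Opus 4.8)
The plan is to fix $x\in B$ and dispose of the easy alternative first: if $x$ is adjacent to every vertex of $V(P)$ we are in the first case, so I assume some $x_a$ is \emph{not} adjacent to $x$. Since $x\in B$ both sets $\{i:x\rightarrow x_i\}$ and $\{i:x_i\rightarrow x\}$ are nonempty, so I may set $t=\max\{i:x\rightarrow x_i\}$ and $s=\min\{i:x_i\rightarrow x\}$. The first thing I would record, via Lemma \ref{semicompleteadj}, is that an out-arc $x\rightarrow x_a$ forces $x$ adjacent to all earlier $x_j$ while an in-arc $x_a\rightarrow x$ forces adjacency to all later $x_j$; contrapositively, any $x_a$ non-adjacent to $x$ satisfies $t<a<s$. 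As such an $x_a$ exists this already gives $s\ge t+2$. A direction check then upgrades adjacency to a clean picture: an arc $x_j\rightarrow x$ with $j\le t$ would force $s\le t$, and an arc $x\rightarrow x_j$ with $j\ge s$ would force $t\ge s$, both absurd, so $x\mapsto\{x_0,\ldots,x_t\}$ and $\{x_s,\ldots,x_{k+2}\}\mapsto x$, the non-adjacent set being exactly $\{x_{t+1},\ldots,x_{s-1}\}\neq\emptyset$.

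It then remains to prove the numerical bounds $t\ge 3$ and $s\le k-1$. For $t\ge 3$ I would argue by contradiction, assuming $t\le 2$, and split on the size of $s$. If $s\le k$, I would invoke $k$-quasi-transitivity on the length-$k$ path $x_{t+1}x_{t+2}\cdots x_{t+k}\,x$, which is legitimate because $t+k\le k+2$ (as $t\le 2$) and $x_{t+k}\rightarrow x$ (as $t+k\ge k\ge s$); its endpoints force $x$ adjacent to $x_{t+1}$, contradicting that $x_{t+1}$ lies in the non-adjacent region. If instead $s\ge k+1$, then $x_{k-1}$ is non-adjacent (here $t<k-1<s$ uses $k\ge 5$), and the length-$k$ path $x\,x_0x_1\cdots x_{k-1}$, built from $x\rightarrow x_0$, forces $x$ adjacent to $x_{k-1}$, again a contradiction. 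Hence $t\ge 3$.

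For the symmetric bound $s\le k-1$ I would pass to the converse digraph $\overleftarrow{D}$, which is $k$-quasi-transitive by Remark \ref{remark}. There $x_{k+2}x_{k+1}\cdots x_0$ is a shortest path, $\overleftarrow{D}[V(P)]$ is again semicomplete, $x$ still lies in $B$ and is still not adjacent to all of $V(P)$; since $x_j\rightarrow x$ in $D$ corresponds to $x\rightarrow x_j$ in $\overleftarrow{D}$, the out-threshold of $x$ along the reversed path equals $k+2-s$. Applying the just-proved statement $t\ge 3$ to $\overleftarrow{D}$ thus reads $k+2-s\ge 3$, i.e.\ $s\le k-1$. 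Combining everything yields $4\le t+1<s\le k-1$ together with $\{x_s,\ldots,x_{k+2}\}\mapsto x\mapsto\{x_0,\ldots,x_t\}$, which is exactly the second alternative of the lemma.

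The bookkeeping of the first paragraph is routine once Lemma \ref{semicompleteadj} is in hand; the genuine obstacle is the threshold bounds, namely producing length-$k$ paths that certify adjacency of the extreme non-adjacent vertices. The case distinction $s\le k$ versus $s\ge k+1$ is what makes a single path insufficient: when the in-region is large one must route a path \emph{out} of $x$ through the front of $P$, whereas when it is small one routes a path \emph{into} $x$ through the back, and checking the index inequalities ($t+k\le k+2$ and $t+k\ge s$; respectively $t<k-1<s$) is precisely where the hypotheses $k\ge 5$ and $t\le 2$ are consumed.
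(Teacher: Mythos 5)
Your proposal is correct, and its first paragraph coincides with the paper's entire written proof: both take $t=\max\{i: x\rightarrow x_i\}$, $s=\min\{j: x_j\rightarrow x\}$, invoke Lemma \ref{semicompleteadj} to locate all non-adjacent vertices strictly between $x_t$ and $x_s$, and read off $s>t+1$ together with $\{x_s,\ldots,x_{k+2}\}\mapsto x\mapsto\{x_0,\ldots,x_t\}$. Where you genuinely diverge is that the paper stops there and never justifies the numerical bounds $t\ge 3$ and $s\le k-1$ that appear in the statement (at best they are meant to be inherited from Lemma \ref{Icocbc}(3), but that lemma's $O(P)$-case only gives $s\le k$ directly, so even that route needs a little extra care). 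You supply these bounds explicitly: for $t\ge 3$ you split on $s\le k$ versus $s\ge k+1$ and in each case exhibit a length-$k$ path ($x_{t+1}\cdots x_{t+k}x$, respectively $xx_0\cdots x_{k-1}$) whose endpoints $k$-quasi-transitivity forces to be adjacent, contradicting that the endpoint lies in the non-adjacent window $\{x_{t+1},\ldots,x_{s-1}\}$; I checked the index conditions ($t+k\le k+2$, $t+k\ge s$, and $t<k-1<s$) and they all hold under the standing hypotheses $t\le 2$ and $k\ge 5$. The bound $s\le k-1$ then follows cleanly by passing to $\overleftarrow{D}$ via Remark \ref{remark}, since the out-threshold of $x$ along the reversed path is $k+2-s$ and all the ingredients (semicompleteness of the induced path, membership of $x$ in $B$, Lemma \ref{semicompleteadj}) are preserved under reversal. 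So your argument is not only valid but more complete than the paper's own proof of this lemma; the trade-off is only length.
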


\begin{proof}  If $x$ and every vertex of $V(P)$ are adjacent, then we are done. Suppose not.
By the definition of $B$,  $(x, V(P))\neq\emptyset$ and $(V(P), x)\neq\emptyset$.
Take $t=\max \{i\ : x\rightarrow x_i\}$ and $s=\min\{j: x_j\rightarrow x\}$.
By Lemma \ref{semicompleteadj}, $x$ and every vertex $V(P[x_0, x_{t-1}])$ are
adjacent and $x$ and every vertex of $V(P[x_s, x_{k+2}])$  are adjacent.
Furthermore, since $x$ and some vertex of $V(P)$ are not adjacent,
we can conclude that $s>t+1$ and $\{x_s, \ldots,  x_{k+2}\}\mapsto x\mapsto \{x_0,\ldots, x_t\}$.
\end{proof}

By Lemmas \ref{outsemicomplete} and \ref{Icocbc}, we can obtain the following lemma.

\begin{lemma}\label{IWtosemi} Suppose that $D[V(P)]$ is a semicomplete digraph. For any $x\in I$, $x\mapsto V(P)$ and
for any $y\in W$, $V(P)\mapsto y$.
\end{lemma}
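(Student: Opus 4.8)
The plan is to reduce the statement to showing that every $x\in I$ dominates \emph{all} of $V(P)$, since $x\Rightarrow V(P)$ already holds by the definition of $I$ (so that $x\mapsto V(P)$ is equivalent to $x\rightarrow V(P)$); the claim for $W$ will then follow by passing to the converse. First I would record that, because $x_0x_1\ldots x_{k+2}x_0$ is a cycle of length $k+3\ge k$, Lemma~\ref{adjacent} guarantees $x$ is adjacent to $V(P)$, and since $x\in I$ forbids any arc from $V(P)$ to $x$, the vertex $x$ must dominate at least one vertex of $V(P)$. That vertex lies in $E(P)$ or in $O(P)$, which splits the argument into two symmetric subcases.

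Next, in the subcase $x\rightarrow x_i$ for some $x_i\in E(P)$, Lemma~\ref{Icocbc}(1) immediately yields $x\mapsto E(P)$. The crucial point is to cross over to the other colour class. Since $k$ is odd, $x_{k+1}\in E(P)$, so $x\rightarrow x_{k+1}$; because $D[V(P)]$ is semicomplete, Lemma~\ref{semicompleteadj} then shows $x$ is adjacent to every vertex of $\{x_0,x_1,\ldots,x_k\}$, and as $x\in I$ each such adjacency is an arc out of $x$, giving $x\rightarrow\{x_0,\ldots,x_k\}$. In particular $x\rightarrow x_1\in O(P)$, so a second application of Lemma~\ref{Icocbc}(1) gives $x\mapsto O(P)$. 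Combining, $x\mapsto E(P)\cup O(P)=V(P)$. The subcase $x\rightarrow x_i\in O(P)$ is entirely parallel: Lemma~\ref{Icocbc}(1) gives $x\mapsto O(P)$, then $x\rightarrow x_{k+2}$ (note $x_{k+2}\in O(P)$ as $k$ is odd) together with Lemma~\ref{semicompleteadj} forces $x$ adjacent to $\{x_0,\ldots,x_{k+1}\}$, hence $x\rightarrow x_0\in E(P)$, whence $x\mapsto E(P)$ and again $x\mapsto V(P)$.

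Finally I would dispatch the statement for $W$ by Remark~\ref{remark}: in $\overleftarrow{D}$ the path $P$ reverses to a shortest $(x_{k+2},x_0)$-path of the same length, $D[V(P)]$ stays semicomplete, and $y\in W$ becomes a vertex with $y\Rightarrow V(P)$ in $\overleftarrow{D}$, i.e.\ a member of the set $I$ computed for $\overleftarrow{D}$; applying the case just proved gives $y\mapsto V(P)$ in $\overleftarrow{D}$, which reverses to $V(P)\mapsto y$ in $D$. The only real subtlety, and the step I would be most careful about, is the ``crossing over'' in the second paragraph: a single dominated vertex only yields one colour class via Lemma~\ref{Icocbc}, so one must exploit both the semicompleteness of $D[V(P)]$ and the parity of $k$ (which places the extreme vertex $x_{k+1}$ in $E(P)$ and $x_{k+2}$ in $O(P)$) to reach a dominated vertex of the opposite class.
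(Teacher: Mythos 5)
Your proof is correct and follows essentially the same route the paper intends: the paper gives no details, stating only that the lemma follows from Lemmas~\ref{outsemicomplete} and~\ref{Icocbc}, and your argument fills in precisely those details (using Lemma~\ref{semicompleteadj}, itself a corollary of Lemma~\ref{outsemicomplete}, to cross between $E(P)$ and $O(P)$, and the converse digraph for $W$).
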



\begin{lemma}\label{sout}  If $D[V(P)]$ is a semicomplete digraph, then $D[V(D)\setminus V(P)]$ is a semicomplete digraph. \end{lemma}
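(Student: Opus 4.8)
The goal is to prove that any two vertices $x,y\in V(D)\setminus V(P)$ are adjacent, and the natural tool is $k$-quasi-transitivity: if I can exhibit a path of length exactly $k$ from $x$ to $y$ (or from $y$ to $x$), then $\overline{xy}$ follows. I would organize the argument around the partition $V(D)\setminus V(P)=I\cup W\cup B$ together with the domination data already established. By Lemma \ref{IWtosemi} every $x\in I$ satisfies $x\mapsto V(P)$ and every $y\in W$ satisfies $V(P)\mapsto y$, while by Lemma \ref{Btosemi} every $b\in B$ has both an in-neighbour and an out-neighbour in $V(P)$. The other essential ingredient is Lemma \ref{outsemicomplete}: since $D[V(P)]$ is semicomplete, it supplies paths inside $V(P)$ from $x_j$ to $x_i$ (for $i<j$) of every length in $\{2,3,\ldots,k+1\}$, and this length flexibility is exactly what lets me tune a spliced path to have total length precisely $k$. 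Finally, by Remark \ref{remark} I may pass to the converse $\overleftarrow{D}$, which interchanges the roles of $I$ and $W$, so it suffices to treat each unordered pair of types once.

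First I would dispose of every pair except $\{I,I\}$ (and, by converse symmetry, $\{W,W\}$). For $x\in I$ and $w\in W$ the path $x\,x_0x_1\cdots x_{k-2}\,w$ has length $k$, so $\overline{xw}$. For any pair involving a vertex $b\in B$, I use that $b$ is two-sided with respect to $V(P)$: combining its in- and out-neighbours in $V(P)$ with $x\mapsto V(P)$ for an $I$-partner, with $V(P)\mapsto w$ for a $W$-partner, or with the two-sidedness of a second $B$-vertex, I enter $V(P)$ from the ``source'' endpoint, run a $P$-path of the appropriate length through it by Lemma \ref{outsemicomplete}, and leave to the ``sink'' endpoint, adjusting the internal length so the total is $k$. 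This settles the cases $B$–$I$, $B$–$W$ and $B$–$B$.

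The crux is the pair $x,y\in I$, where the routing above breaks down: both vertices only emit arcs into $V(P)$ and receive none from it, so no path through $V(P)$ can terminate at $x$ or $y$. To reach $y$ I would invoke strong connectivity. Since $D$ is strong there is a path from $x_0$ to $y$; consider the last vertex $x_j$ of this path lying in $V(P)$. Its successor $z$ satisfies $x_j\to z$ and lies off $P$, hence $z\in W\cup B$, and the remaining subpath $z\rightsquigarrow y$ avoids $V(P)$ entirely. I then splice: using $x\mapsto V(P)$ and Lemma \ref{outsemicomplete} I build a path from $x$ to $x_j$ inside $\{x\}\cup V(P)$ whose length is chosen so that, after appending $x_j\to z\rightsquigarrow y$, the total is exactly $k$, and I conclude $\overline{xy}$ by $k$-quasi-transitivity. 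The pair $W$–$W$ then follows from Remark \ref{remark}.

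The main obstacle I anticipate is precisely this last splice: controlling the length of the off-$P$ tail $z\rightsquigarrow y$ so that the required internal length from $x$ to $x_j$ stays within the admissible range $\{1\}\cup\{2\}\cup\{3,\ldots,k+2\}$ realizable in $\{x\}\cup V(P)$ (direct arc $x\to x_j$, the detour $x\to x_m\to x_j$ with $m>j$, or $x\to x_m$ followed by a $P$-path of length in $\{2,\ldots,k+1\}$). I expect to handle this by taking $z\rightsquigarrow y$ to be a shortest off-$P$ path and exploiting the minimality of $P$ to keep its length below $k-1$. A few boundary situations—most notably a $B$-vertex whose only in-neighbour in $V(P)$ is $x_{k+2}$, where no monotone subpath of $P$ reaches the needed endpoint—are similarly resolved by routing along the $(k+3)$-cycle $C=x_0x_1\cdots x_{k+2}x_0$ furnished by Lemma \ref{adjacent} rather than along $P$ itself.
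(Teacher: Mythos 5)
Your overall strategy is the intended one: split $V(D)\setminus V(P)$ into $I$, $W$, $B$, use Lemmas \ref{IWtosemi} and \ref{Btosemi} for the domination data, and manufacture paths of length exactly $k$ through $V(P)$ via the length flexibility of Lemma \ref{outsemicomplete}. (The paper itself only cites Lemma 2.11 of \cite{wang5} here, but that proof runs along these lines.) The $I$--$W$ case and the cases involving $B$ are fine in outline, modulo routine care about which monotone subpaths of $P$ are available.

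The genuine gap is in the case you yourself identify as the crux, $x,y\in I$. Your splice $x\rightsquigarrow x_j\to z\rightsquigarrow y$ needs the off-$P$ tail $z\rightsquigarrow y$ to have length at most $k-2$, and you propose to guarantee this by taking a shortest such path and ``exploiting the minimality of $P$.'' That does not work: the minimality of $P$ controls only distances between vertices of $P$ (indeed only along $P$), and says nothing about the length of a shortest $(z,y)$-path avoiding $V(P)$; the digraph $D[B\cup W\cup I]$ can a priori contain arbitrarily long induced chains leading into $I\setminus\widetilde{I}$, so no such bound is available. When the tail has length $\ell\ge k-1$, applying $k$-quasi-transitivity to an initial segment of length $k$ only yields that $x$ (or $x_j$) is \emph{adjacent} to some intermediate vertex $z_m$ of the tail; to shorten the route and iterate you need the arc to point \emph{from} $x$ \emph{to} $z_m$, and that direction is not automatic. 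Settling it is exactly what consumes most of the effort in the paper's parallel argument for the bipartite case (the set $\widetilde{I}$, Claims 4--9, and the induction in Claim 10 and at the end of Theorem \ref{theorem2}, where direction is forced by statements like $(W,I)=\emptyset$ and by comparing $d_{D[I]}(\widetilde{I},\cdot)$). Your proof needs the analogous induction along the connecting path, with an explicit mechanism forcing the orientation of the chords at each step; without it the $I$--$I$ (and by Remark \ref{remark} the $W$--$W$) case is not established.
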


  \begin{proof} According to Lemmas \ref{Btosemi} and \ref{IWtosemi}, similar to the proof of Lemma 2.11 in \cite{wang5},
  the result can be shown.
  \end{proof}

\begin{lemma}\label{semiDvp}  If $D[V(P)]$ is a semicomplete bipartite digraph, then $D[B]$ is either
a semicomplete bipartite digraph or an empty digraph.  \end{lemma}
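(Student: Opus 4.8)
The plan is to produce an explicit bipartition of $B$ and to control the adjacencies inside $B$ by building paths of length exactly $k$ through $V(P)$ and then invoking $k$-quasi-transitivity. Throughout I use the standing hypothesis that $D[V(P)]$ is semicomplete bipartite, which by Lemma \ref{semicomplete} means that neither $O(P)$ nor $E(P)$ contains an arc: as soon as two vertices of the same part are adjacent, $D[V(P)]$ is forced to be semicomplete. If $B=\emptyset$ there is nothing to prove, so assume $B\neq\emptyset$.

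The core device is a parity count. Any path contained in $D[V(P)]$ alternates between $O(P)$ and $E(P)$, so its endpoints lie in the same part if and only if its length is even; since $k$ is odd, a path of length $k$ lying entirely in $V(P)$ joins the two parts. Now suppose a vertex $x\in B$ has an in-neighbour and an out-neighbour of \emph{different} parities, say $x_j\to x\to x_i$ with $x_j\in E(P)$ and $x_i\in O(P)$. This length-$2$ \emph{mixed detour} flips the parity, so prepending or appending a forward subpath of $P$ of suitable even length $k-2$ yields a path of length exactly $k$ whose two endpoints lie in the \emph{same} part of $V(P)$; by $k$-quasi-transitivity these endpoints would be adjacent, contradicting that the part is independent. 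First I would use this, together with the case description in Lemma \ref{Icocbc}(3), to prove the dichotomy that every $x\in B$ is adjacent to every vertex of exactly one of $O(P),E(P)$ and to no vertex of the other. Indeed, if $x$ met both parts (the split pattern of Lemma \ref{Icocbc}(3) on one part together with anything on the other, or adjacency to all of $V(P)$), then $x$ would possess an in-neighbour and an out-neighbour of opposite parities, hence a mixed detour, and the contradiction applies. This lets me set $B_1=\{x\in B:\ x\ \mbox{is adjacent to all of}\ E(P)\}$ and $B_2=\{x\in B:\ x\ \mbox{is adjacent to all of}\ O(P)\}$, and conclude $B=B_1\sqcup B_2$ (they are disjoint, as a vertex in both would be adjacent to all of $V(P)$, already excluded). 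Note that every vertex of $B_1$ then has all of its $V(P)$-neighbours, in both directions, inside $E(P)$, and symmetrically for $B_2$.

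With the bipartition in hand I would settle the two remaining points. For independence, take $x,y$ in the same class, say $B_1$, and suppose $x\to y$; since the relevant in-neighbours of $x$ and out-neighbours of $y$ all lie in $E(P)$, the detour $x_j\to x\to y\to x_i$ with $x_j,x_i\in E(P)$ has length $3$ and keeps both ends in $E(P)$, so padding by a forward subpath of $P$ of even length $k-3$ produces a length-$k$ path with both endpoints in $E(P)$, again a contradiction. Hence $B_1$ and $B_2$ are independent. For cross-adjacency, take $x\in B_1$ and $y\in B_2$: choosing an out-neighbour $x_i\in E(P)$ of $x$ and an in-neighbour $x_j\in O(P)$ of $y$ (or the symmetric choice with the roles of $x$ and $y$ reversed), and joining them by a path of length $k-2$ from $x_i$ to $x_j$ supplied by Lemma \ref{bipartite}(2), gives a path of length $k$ from $x$ to $y$, whence $\overline{xy}$. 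Finally, if both $B_1$ and $B_2$ are nonempty then $D[B]$ is semicomplete bipartite with bipartition $(B_1,B_2)$, and if one of them is empty then $D[B]$ has no arcs, i.e. it is an empty digraph.

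The main obstacle is not the parity idea itself but the bookkeeping that makes each constructed length-$k$ walk an honest (simple) path and forces its two endpoints to be distinct vertices of the same part. This requires choosing the padding forward subpath so that it avoids the detour vertices, and handling the boundary indices $x_0,x_1,x_{k+1},x_{k+2}$ together with the transition vertices of each segment, where $x$ may be adjacent to $V(P)$ in either direction, by selecting the appropriate extreme neighbour or by passing to the converse via Remark \ref{remark}. I expect these case checks, in particular pinning down the exact endpoints in the cross-adjacency step when the two relevant thresholds coincide, to be where the real work lies.
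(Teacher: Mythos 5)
Your overall architecture (split $B$ by which part of $V(P)$ a vertex meets, prove disjointness by a parity argument on length-$k$ paths, then independence within and adjacency across the two classes) is the same as the paper's, and the parity computation behind the ``mixed detour'' is correct. But two of your three steps contain genuine gaps, not just bookkeeping. The most serious is the cross-adjacency step. You propose to join an out-neighbour $x_i\in E(P)$ of $x\in B_1$ to an in-neighbour $x_j\in O(P)$ of $y\in B_2$ by the length-$(k-2)$ path of Lemma \ref{bipartite}(2). That lemma only supplies such a path from the \emph{higher}-indexed vertex to the \emph{lower}-indexed one (it is built from the backward arcs of Proposition \ref{back}), so you need $i>j$; but by Lemma \ref{bipartite}(3) the out-neighbours of $x$ in $E(P)$ accumulate at the bottom of $P$ (minimum index $0$ or $2$) while the in-neighbours of $y$ in $O(P)$ accumulate at the top (maximum index $k$ or $k+2$), and nothing forces $x$ to have a high out-neighbour or $y$ a low in-neighbour. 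The symmetric choice fails for the same reason. This is exactly why the paper proves this step by contradiction, with a page-long chain of deductions (using the cycle $x_0\ldots x_{k+2}x_0$, the impossibility of $x\rightarrow x_i$ together with $x_{k-2+i}\rightarrow y$, and Lemma \ref{Icocbc}) rather than by a direct construction; your one-sentence version does not survive the bad configuration.

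Two further points. First, your claimed dichotomy that every $x\in B$ is adjacent to \emph{every} vertex of exactly one part overreaches: the mixed-detour argument only rules out meeting both parts, not the split pattern of Lemma \ref{Icocbc}(3) inside a single part, so $B_1,B_2$ should be defined by adjacency to \emph{some} vertex of $E(P)$ resp.\ $O(P)$ (as the paper does); this is easily repaired. Second, the ``padding by forward subpaths of $P$'' that you defer as bookkeeping is not mere bookkeeping: for a detour $x_t\rightarrow x\rightarrow x_r$ the two forward segments must be disjoint and stay inside $[0,k+2]$, which fails for instance when $t=k$ and $r=2$ (any split of $k-2$ forces the segments to collide or the endpoints to coincide). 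The paper resolves this by first normalizing $t\in\{k,k+2\}$ and $r\in\{0,2\}$ via Lemma \ref{bipartite}(3) and then using wrap-around paths such as $x_{k+1}x_{k+2}P[x_4,x_k]xx_r$, which are not concatenations of forward subpaths of $P$. So the ideas you need to close the argument (the extremal normalization and the backward/wrap-around routes) are identifiable, but they are missing from the proposal, and the cross-adjacency step in particular needs an entirely different argument.
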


\begin{proof}  Now we divide $B$ into two subsets.
Denote $B_1=\{x\in B: x \; \mbox{and }\;  E(P)  \\ \mbox{are adjacent}\}$ and
$B_2=\{x\in B: x \  \mbox{and}  \  O(P)  \ \mbox{are adjacent}\}$.
One of $B_1$ and $B_2$ may be empty.

First we claim $B_1\cap B_2=\emptyset$. Suppose not.  Let $x\in B_1\cap B_2$ be arbitrary. It is not difficult to obtain that
there exist two vertices $x_i$ and $x_j$ in $V(P)$ such that $x_i\rightarrow x\rightarrow x_j$
and the parity of $i$ and $j$ is different.  Without loss of generality, assume that $j$ is even and $i$ is odd.
Take $t=\max\{i: x_i\rightarrow x \  \mbox{and}\  x_i\in O(P)\}$ and $r=\min\{j: x\rightarrow x_j\  \mbox{and}\  x_j\in E(P)\}$.
By Lemma \ref{bipartite}(3), $t=k$ or $k+2$ and $r=0$ or $2$.  If $t=k$, the path $x_{k+1}x_{k+2}P[x_4, x_k]xx_r$
implies that $\overline{x_{k+1}x_r}$. Note that $x_{k+1}, x_r\in E(P)$.  If $t=k+2$, then the path
$P[x_4, x_{k+2}]xx_r$  implies that $\overline{x_4x_r}$. Note that $x_4, x_r\in E(P)$. We have found two vertices of $E(P)$ such that they are adjacent. By Lemma \ref{semicomplete}, $D[V(P)]$ is a semicomplete digraph,
a contradiction. Hence $B_1\cap B_2=\emptyset$.

Now we show that every vertex of $B_1$ and every vertex of $B_2$ are adjacent. Let $x\in B_1$ and $y\in B_2$
be arbitrary two vertices. Suppose, on the contrary,  that $x$ and $y$ are not adjacent. Note that for any $x_i\in V(P)$, it is impossible that $x\rightarrow x_i$ and $x_{k-2+i}\rightarrow y$ both hold,
otherwise, $xx_ix_{i+1}\ldots x_{k-2+i}y$ implies that $\overline{xy}$, a contradiction, where the subscripts are taken modulo $k+3$. By Lemma \ref{bipartite}(2), if $y\rightarrow x_j$ for some $x_j\in O(P)$, then
$x\Rightarrow \{x_0, \ldots, x_{j-1}\}\cap E(P)$. Analogously,  if $x\rightarrow x_i$ for some $x_i\in E(P)$,
then $y\Rightarrow \{x_0, \ldots, x_{i-1}\}\cap O(P)$.  By Lemma  \ref{Icocbc},
$x$ and every vertex of $\{x_0, x_2, x_{k-1}, x_{k+1}\}$ are adjacent and $y$ and every vertex of $\{x_1, x_3, x_k, x_{k+2}\}$
are adjacent.
Suppose $y\rightarrow x_k$. Then we have $x\Rightarrow \{x_0, x_1,\ldots, x_{k-1}\}\cap E(P)$, in particular, $x\rightarrow x_{k-1}$.
By the definition of $B$ and $B_1\cap B_2=\emptyset$,  $x_{k+1}\rightarrow x$.  So it must be  $x_3\rightarrow y$,
otherwise $yx_3x_4\ldots x_{k+1}x$ implies $\overline{xy}$, a contradiction.
But it is impossible as  $x\rightarrow x_{k-1}$. Therefore  $x_k\mapsto y$. This implies  $x_2\mapsto x$ and
furthermore  $\{x_3, x_5, \ldots, x_k, x_{k+2}\}\Rightarrow y$, in particular, $x_3\rightarrow y$.
By the definition of $B$ and $B_1\cap B_2=\emptyset$, we have  $y\rightarrow x_1$. So $x\rightarrow x_{k-1}$.
But it is impossible as $x_3\rightarrow y$. Thus $\overline{xy}$. By the arbitrariness of $x$ and $y$,
every vertex of $B_1$ and every vertex of $B_2$ are adjacent.

Next we show that $B_1$ and $B_2$ both are independent sets. Suppose not. Without loss of generality,
assume that there exist two vertices $x', x''\in B_1$ such that $x'\rightarrow x''$. By the definition of $B_1$ and $B_1\cap B_2=\emptyset$,
there exists $x_i\in E(P)$ such that $x''\rightarrow x_i$. But the path $x'x''C[x_i, x_{i+(k-2)}]$
implies $\overline{x'x_{i+k-2}}$. Note that $i+k-2$ is odd. This is a contradiction to $x'\in B_1$.
If one of $B_1$ and $B_2$ is empty, then $D[B]$ is an empty digraph, otherwise $D[B]$ is a semicomplete bipartite digraph.
\end{proof}

\begin{theorem}\label{theorem2} The subdigraph induced by $V(D)\setminus V(P)$ is either a semicomplete digraph,
a semicomplete bipartite digraph or an empty digraph.\end{theorem}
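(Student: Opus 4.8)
The plan is to reduce the statement to the dichotomy for $D[V(P)]$ proved in Theorem~\ref{theorem} and then to treat the two alternatives separately. If $D[V(P)]$ is a semicomplete digraph, there is nothing further to do: Lemma~\ref{sout} already states that $D[V(D)\setminus V(P)]$ is then semicomplete. Thus the whole weight of the proof falls on the case in which $D[V(P)]$ is a semicomplete bipartite digraph, and there I may assume by Lemma~\ref{semicomplete} that no two vertices of $E(P)$ and no two vertices of $O(P)$ are adjacent (otherwise $D[V(P)]$ is semicomplete and the first case applies).

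In that case I would retain the partition $V(D)\setminus V(P)=I\cup W\cup B$ fixed before Lemma~\ref{Icocbc}. The block $B$ is already resolved: Lemma~\ref{semiDvp} gives the bipartition $(B_1,B_2)$ of $B$ into vertices adjacent to $E(P)$ and vertices adjacent to $O(P)$, making $D[B]$ semicomplete bipartite or empty. The remaining work is to attach $I$ and $W$. Using Lemma~\ref{Icocbc}(1) I would call $x\in I$ of \emph{even type} if $x\mapsto E(P)$ and of \emph{odd type} if $x\mapsto O(P)$, and dually, using Lemma~\ref{Icocbc}(2), classify each $y\in W$ by whether $E(P)\mapsto y$ or $O(P)\mapsto y$. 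I would then let $X$ consist of $B_1$ together with all even-type vertices of $I\cup W$, let $Y$ consist of $B_2$ together with all odd-type vertices, and aim to show that $(X,Y)$ realises $D[V(D)\setminus V(P)]$ as a semicomplete bipartite digraph, or as an empty digraph if one side is vacuous.

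The verification splits into a completeness check across $X$ and $Y$ and an independence check inside each. For completeness, given an even-type $u$ and an odd-type $w$ such that one of them has an in-arc from $V(P)$ and the other an out-arc to $V(P)$, I would run the length-$(k-2)$ opposite-parity path of Lemma~\ref{bipartite}(2) between the relevant vertices of $V(P)$ and prolong it by the two external arcs into a path of length $k$; $k$-quasi-transitivity then yields $\overline{uw}$ — this is exactly the conclusion recorded for external vertices in Lemma~\ref{bipartite}(2). For independence, if two even-type vertices satisfied $u\to u'$, then choosing $x_i\in E(P)$ with $u'\to x_i$ and following $uu'C[x_i,x_{i+(k-2)}]$ around the cycle $x_0x_1\cdots x_{k+2}x_0$ would force $u$ adjacent to $x_{i+(k-2)}$, an odd-indexed vertex, contradicting the even type of $u$; the odd-type case and the set $W$ follow by passing to $\overleftarrow{D}$ via Remark~\ref{remark}.

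The hard part is that both checks have a genuine gap that is absent for $B$, and both gaps trace back to the same asymmetry: a vertex of $I$ possesses only out-arcs to $V(P)$ and a vertex of $W$ only in-arcs. First, the independence argument presumes the type of each vertex is well defined, i.e.\ that no vertex of $I$ dominates vertices of both $E(P)$ and $O(P)$ (and dually for $W$). For $B$ the corresponding fact $B_1\cap B_2=\emptyset$ was forced by inserting a $B$-vertex in the \emph{middle} of a length-$k$ path, thereby making two vertices of one part of $V(P)$ adjacent, against Lemma~\ref{semicomplete}; for $I$ and $W$ no such middle insertion is possible. Second, the completeness argument breaks down precisely for a pair lying entirely inside $I$ (or entirely inside $W$) of opposite types, since then neither vertex offers the in-arc that Lemma~\ref{bipartite}(2) requires, so their adjacency is not forced through $V(P)$ at all. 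I therefore expect the crux of the proof to be a pair of auxiliary arguments — using the minimality of $P$, the strong connectivity of $D$, and further tailored length-$k$ paths — showing that such mixed vertices do not occur and that $I$ (respectively $W$) cannot contain a non-adjacent even-type/odd-type pair. Once these are in place, the completeness and independence checks above close, and the theorem follows.
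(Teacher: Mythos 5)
Your reduction to the semicomplete-bipartite case, the partition $V(D)\setminus V(P)=I\cup W\cup B$, the classification of $I$- and $W$-vertices by the parity class of $V(P)$ they dominate (or are dominated by), and the proposed bipartition $(B_1\cup I_1\cup W_1,\; B_2\cup I_2\cup W_2)$ all coincide with what the paper does, as do the cross-part adjacency check via Lemma~\ref{bipartite}(2)/length-$k$ paths around the cycle $x_0x_1\ldots x_{k+2}x_0$ and the independence check for pairs whose head has an arc into $V(P)$. But the two items you explicitly defer --- (a) that no vertex of $I$ dominates both a vertex of $E(P)$ and a vertex of $O(P)$ (i.e.\ $I_1\cap I_2=\emptyset$, and dually for $W$), and (b) that an even-type and an odd-type vertex lying \emph{both} in $I$ (or both in $W$) are adjacent --- are not side conditions; they are the entire technical content of the paper's proof, and your proposal contains no argument for either. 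As you yourself observe, the obstruction is real: a vertex of $I$ has no in-arc from $V(P)$, so no length-$k$ path through $V(P)$ can be made to end at it, and "minimality of $P$ plus tailored length-$k$ paths" does not by itself produce one.

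The missing idea, which resolves both gaps at once, is the auxiliary set $\widetilde{I}=\{x\in I:(B\cup W,x)\neq\emptyset\}$: strong connectivity forces $\widetilde{I}\neq\emptyset$, every vertex of $I$ is reachable from $\widetilde{I}$ by a path staying inside $D[I]$ (Claim~9), and a vertex $y\in B$ with $y\rightarrow z_0\in\widetilde{I}$ can be prefixed by a path inside $V(P)$ ending at $y$, so that length-$k$ paths \emph{entering} $I$ become available after all. For (a), one first shows $I_1\cap I_2\mapsto V(P)\cup B\cup W$, so any in-neighbour of such a vertex lies in $I$ and must ultimately come from $\widetilde{I}$, whence Claim~3 propagates double adjacency backwards and yields a contradiction. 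For (b), the paper runs an induction on the length $m$ of a shortest path $R=z_0\ldots z_m$ in $D[I]$ from $\widetilde{I}$ to $x'$, splicing $x''$, a segment of $P$, the $B$-vertex $y$, and $R$ into a path of length exactly $k$ (with a separate base case $m\geq k-1$ handled by re-entering $R$ at $z_{m-(k-1)}$). You would also still need the various emptiness claims $(W,I)=(I_i,B_i)=(B_i,I_i)=(I_i,W_i)=\emptyset$ etc.\ to get independence \emph{between} $I_1$, $B_1$ and $W_1$ inside one part, which your independence check (requiring the head of the arc to have an out-arc to $E(P)$) does not cover when the head lies in $W_1$. So the skeleton is right and matches the paper, but the proof as proposed is incomplete at exactly its crux.
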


\begin{proof} By Theorem \ref{theorem}, $D[V(P)]$ is either a semicomplete digraph or a semicomplete
bipartite digraph. If $D[V(P)]$ is a semicomplete digraph, then by Lemma \ref{sout}, we are done.
Now consider that $D[V(P)]$ is a semicomplete bipartite digraph.  From now on, all subscripts
appearing in this proof are taken modulo $k+3$.

By Lemma \ref{semiDvp}, $D[B]$ is either a semicomplete bipartite digraph or an empty digraph.
By Lemma \ref{Icocbc}, for any $x\in I$, either $x\mapsto E(P)$ or $x\mapsto O(P)$ or both and for any $y\in W$,
either $E(P)\mapsto y$ or $O(P)\mapsto y$ or both. Hence, we divide $I$ into two sets:
$I_1=\{x\in I: x\mapsto E(P)\}$ and $I_2=\{x\in I: x\mapsto O(P)\}$ and divide $W$ into two sets:
$W_1=\{x\in W: E(P)\mapsto x\}$ and $W_2=\{x\in W: O(P)\mapsto x\}$. $B_1$ and $B_2$ are defined similar to Lemma \ref{semiDvp}.
Now we show that $D[I]$ is either a  semicomplete bipartite digraph or an empty digraph. If $|I|\le 1$, there is nothing to prove. Now assume $|I|\ge 2$.
Since $D$ is strong and $(V(P), I)=\emptyset$, we have $B\cup W\neq\emptyset$ and $(B\cup W, I)\neq\emptyset$.
Define $\widetilde{I}=\{x\in I: (B\cup W, x)\neq\emptyset\}$. Clearly, $\widetilde{I}\neq\emptyset$.

To complete the proof of this theorem, we first give the following several claims.

\vskip0.2cm
\noindent{\bf Claim 1.} Every vertex of $I_i$ is adjacent to every vertex of $B_{3-i}\cup W_{3-i}$ and every vertex of $W_i$ is adjacent to every vertex of $B_{3-i}$ for $i=1,2$.

\begin{proof}
Let $x\in I_1$ and $y\in B_2\cup W_2$ be arbitrary. By the definition of $B_2\cup W_2$, there exists $x_i\in O(P)$
such that $x_i\rightarrow y$. Then the path $xC[x_{i+5}, x_i]y$ is a path of length $k$, which implies that $\overline{xy}$.  Analogously, we can show that every vertex of $I_2$
is adjacent to every vertex of $B_1\cup W_1$ and every vertex of $W_i$ is adjacent to every vertex of $B_{3-i}$ for $i=1,2$.
\end{proof}

\vskip0.2cm
\noindent{\bf Claim 2.} Let $Q=y_0y_1\ldots y_q$ be a path of $V(D)\setminus V(P)$ with  $q\le k-1$. Suppose that
there exists $x_i\in V(P)$ such that $x_i\rightarrow y_0$. If $i$ and $q$ have the same parity, then $y_q$
and $E(P)$ are adjacent; if the parity of $i$ and $q$ are different, then $y_q$ and $O(P)$ are adjacent.

\begin{proof} Note that $x_{(q+i)-(k-1)}\ldots x_iy_0\ldots y_q$ is a path of length $k$, which implies
$\overline{x_{(q+i)-(k-1)}y_q}$. If $i$ and $q$ have the same parity, then $(q+i)-(k-1)$ is even and so
$x_{(q+i)-(k-1)}\in E(P)$. If the parity of $i$ and $q$ are different, then $(q+i)-(k-1)$ is odd and so
$x_{(q+i)-(k-1)}\in O(P)$.\end{proof}

Similar to Claim 2, we can obtain the following claim.
\vskip0.2cm

\noindent{\bf Claim 3.} Let $Q=y_0y_1\ldots y_q$ be a path of $V(D)\setminus V(P)$ with  $q\le k-1$.
Suppose that there exists $x_j\in V(P)$ such that $y_q\rightarrow x_j$. If $j$ and $q$ have the same parity,
then $y_0$ and $E(P)$ are adjacent; if $j$ and $q$ have the different parity, then $y_0$ and $O(P)$ are adjacent.

\vskip 0.2cm

\noindent{\bf Claim 4.} $(W, I)=\emptyset$.

\begin{proof} Suppose not. Let $xy$ be an arc from $W$ to $I$. By Lemma \ref{Icocbc}(1) and (2), there exist $x_i\in\{x_0, x_1\}$ and $x_j\in \{x_{k+1}, x_{k+2}\}$ such that $x_i\rightarrow x$ and $y\rightarrow x_j$. Since $P$ is minimal, we have $d(x_i, x_j)\ge k\ge 5$. However $x_ixyx_j$ is a path of length 3, a contradiction.
\end{proof}

\noindent{\bf Claim 5.} $(B_i, I_i)=\emptyset$ and $(W_i, B_i)=\emptyset$, for $i=1,2$.

\begin{proof}
  Assume $(B_1, I_1)\neq\emptyset$. Let $xy$ be an arc from $B_1$ to $I_1$. By Lemma \ref{Icocbc}(1), $y\rightarrow x_{k+1}$. Consider the path $xy$. By Claim 3, $x$  and $O(P)$ are adjacent. Since $B_1\cap B_2=\emptyset$, it is impossible. Thus $(B_1, I_1)=\emptyset$. Analogously, we can show that $(B_2, I_2)=\emptyset$  and $(W_i, B_i)=\emptyset$, for $i=1,2$.
\end{proof}

\noindent{\bf Claim 6.} $I_1\cap I_2=\emptyset$ and $W_1\cap W_2=\emptyset$.

\begin{proof} Suppose $I_1\cap I_2\neq\emptyset$. Let $z\in I_1\cap I_2$ be arbitrary.
By Lemma \ref{Icocbc}(1),
$z\mapsto V(P)$ and by Claim 1, $z$ and every vertex of $B\cup W$ is adjacent.
According to Claims 4 and 5, we can conclude that $z\mapsto B\cup W$.
That is to say, $I_1\cap I_2\mapsto B\cup W$,
which also implies that for any $w\in\widetilde{I}$, $w\notin I_1\cap I_2$. Since $D$ is strong,
there exist $v\in I_1\cap I_2$ and $u\in \widetilde{I}$ such that $u\rightarrow v$.
According to Claim 3 and $v\rightarrow V(P)$, $u\in I_1\cap I_2$, a contradiction.  Hence $I_1\cap I_2=\emptyset$.
Analogously, we can show that $W_1\cap W_2=\emptyset$. \end{proof}

By Claims 3 and 6, we have the following claim.

\vskip 0.2cm

\noindent{\bf Claim 7.} $I_i$ and $W_i$ are both independent sets, for $i=1,2$.
\vskip 0.2cm

\noindent{\bf Claim 8.} $(I_i, B_i)=\emptyset$ and $(B_i, W_i)=\emptyset$, for $i=1,2$.

\begin{proof}
  Assume $(I_1, B_1)\neq\emptyset$. Let $uv$ be an arc from $I_1$ to $B_1$. There exists $x_i\in E(P)$ such that $v\rightarrow x_i$. Then by Claim 4, $u$ and $O(P)$ are adjacent, a contradiction.  Thus $(I_1, B_1)=\emptyset$. Analogously, we can show that $(I_2, B_2)=\emptyset$  and $(B_i, W_i)=\emptyset$, for $i=1,2$.
\end{proof}

\noindent{\bf Claim 9.} For any $w\in I\setminus \widetilde{I}$, $d_{D[I]}(\widetilde{I}, w)<\infty$.

\begin{proof}
Since $D$ is strong, $w$ is reachable from $\widetilde{I}$ in $D$. Let $Q=y_0y_1\ldots y_m$
be a shortest path from $\widetilde{I}$ to $w$ in $D$, where $y_0\in\widetilde{I}$, $y_m=w$
and $m\ge 1$. Now we show $V(Q)\subset\widetilde{I}$. Because $y_m\in I\setminus \widetilde{I}$,
we have $y_{m-1}\in I$. Denote $r=\min \{j: y_j,\ldots, y_m\in I\}$. If $r\ge 1$, then $y_{r-1}\notin I$.
By the definition of $\widetilde{I}$, $y_r\in \widetilde{I}$. Then $y_r\ldots y_m$ is a shorter path from
$\widetilde{I}$ to $w$ than $Q$, a contradiction. Hence $r=0$ and so $V(Q)\subset I$.
\end{proof}

\noindent{\bf Claim 10.} $(I_i, W_i)=\emptyset$ for $i=1,2$.

\begin{proof} Assume $(I_1, W_1)\neq\emptyset$. Let $uv$ be an arc from $I_1$ to $W_1$.
If $u\in \widetilde{I}$, then by Claims 4 and 5, there exists $z\in B_2$ such that $z\rightarrow u$.
There exists $x_i\in O(P)$ such that $x_i\rightarrow z$.  Consider the path $zuv$.  By Claim 3, $v$ and $O(P)$ are adjacent, a contradiction.  Hence $u\notin \widetilde{I}$. By Claim 9, there exists a path $Q=u_0u_1\ldots u_t$ from $\widetilde{I}$ to $u$ in $D[I]$,
where $u_0\in \widetilde{I}$ and $u_t=u$. If $t\ge k-1$, then  $u_{t-(k-1)}\ldots u_tv$ is a path of length $k$,
which implies $\overline{u_{t-(k-1)}v}$ and furthermore $u_{t-(k-1)}\rightarrow v$ by Claim 4. Repeating using this way,
there exists an integer $i_0$ such that $u_{t-i_0(k-1)}\rightarrow v$ and $0\le t-i_0(k-1)<k-1$. Note that $u_{t-i_0(k-1)}\in I_1$. So we assume, without loss of generality, that $t\le k-2$.

If $u_0\in I_1$, then, by Claim 7, $t$ is even and so $t\le k-3$. Since $u_0\in \widetilde{I}$ and $I_1\Rightarrow B_1\cup W$,
there exists $y\in B_2$ such that $y\rightarrow u_0$ and there exists $x_j\in O(P)$
such that $x_j\rightarrow y$. Consider the path $R_1=yu_0\ldots u_tv$.
Note that the length of $R_1$ is even. By Claim 2, $v$ and $O(P)$ is adjacent, a contradiction.
If $u_0\in I_2$, we have $t$ is odd. Since $u_0\in \widetilde{I}$ and $I_2\Rightarrow B_2\cup W$,
there exists $y\in B_1$ such that $y\rightarrow u_0$ and there exists $x_i\in E(P)$
such that $x_i\rightarrow y$. Consider the path $R_2=yu_0\ldots u_tv$. By Claims 5 and 8, we have that $y$ and $v$ are not adjacent and so $t\le k-4$. By Claim 2, $v$ and $O(P)$ is adjacent, a contradiction.
Therefore, $(I_1, W_1)=\emptyset$. Analogously,
we can show $(I_2, W_2)=\emptyset$. \end{proof}

\noindent{\bf Claim 11.} Every vertex of $\widetilde{I}\cap I_i$ is adjacent to every vertex of $I_{3-i}$ for $i=1,2$.

\begin{proof}  Let $x\in \widetilde{I}\cap I_1$ and
$x''\in I_2$ be arbitrary. By Claims 4 and 5, there exists $y\in B_2$ such that $y\rightarrow x$ and furthermore
there exists $x_j\in O(P)$ such that $x_j\rightarrow y$. Then $x''P[x_{j-(k-3)}, x_j]yx$ implies $\overline{x''x}$. Analogously,
we can show that every vertex of $\widetilde{I}\cap I_2$ is adjacent to every vertex of $I_1$.
\end{proof}

Now we return the proof of the theorem. By Claim 7, $I_1$ and $I_2$ are both independent sets. If one of $I_1$ and $I_2$ is an empty set, then $D[I]$ is an empty digraph. Assume that $I_1$ and $I_2$ are both nonempty sets.

Now we show that $D[I]$ is a semicomplete bipartite digraph. Using Claim 11, we only need to prove that $D[I\setminus\widetilde{I}]$ is a semicomplete bipartite digraph.

Let $x'\in I_1\setminus\widetilde{I}$ and $x''\in I_2\setminus\widetilde{I}$ be arbitrary.
We shall show that $\overline{x'x''}$. By Claim 9, $d_{D[I]}(\widetilde{I}, x')<\infty$ and $d_{D[I]}(\widetilde{I}, x'')<\infty$.
Without loss of generality,  assume that $d_{D[I]}(\widetilde{I}, x')\le d_{D[I]}(\widetilde{I}, x'')$. Let $R=z_0z_1\ldots z_m$
be a shortest path from $\widetilde{I}$ to $x'$ in $D[I]$, where $z_0\in \widetilde{I}$ and $z_m=x'$.
By the minimality of $R$, $z_1, \ldots, z_m\in I\setminus \widetilde{I}$. In addition,  $x''\notin V(R)$.
By the definition of $\widetilde{I}$, there exists $y\in B$ such that $y\rightarrow z_0$.

Assume that $z_0\in I_1$. In this case $y\in B_2$ and there exists $x_j\in O(P)$
such that $x_j\rightarrow y$. By Claim 7 and $x'\in I_1$, we have $m$ is even and so $m\ge 2$.
By Claim 11, $\overline{z_0x''}$ and $x''\rightarrow z_0$ as $d_{D[I]}(\widetilde{I}, x')\le d_{D[I]}(\widetilde{I}, x'')$.
If $m\le k-3$, then $x''P[x_{j-(k-3-m)}, x_j]yR$ implies that $\overline{x''z_m}$. If $m=k-1$,
then $x''R$ implies $\overline{x''z_m}$.  The proof for the case $m\ge k-1$ is by induction on even $m$ with
the case $m=k-1$ as the basis. By induction, $x''$ and every vertex of $\{z_0, z_2,\ldots, z_{m-2}\}$ are adjacent,
in particular, $x''$ and $z_{m-(k-1)}$ are adjacent. As $d_{D[I]}(\widetilde{I}, x')\le d_{D[I]}(\widetilde{I}, x'')$,
$x''\rightarrow z_{m-(k-1)}$. Note that $x''R[z_{m-(k-1)}, z_m]$ is a path of length $k$, which implies that
$\overline{x''z_m}$.

Now consider the case $z_0\in I_2$. In this case $y\in B_1$ and $x''\rightarrow y$. There exists $x_j\in E(P)$ such that $x_j\rightarrow y$. According to Claim 7 and $x'\in I_1$, we have $m$ is odd and $m\ge 1$. If $m\le k-3$, then $x''P[x_{j-(k-3-m)}, x_j]yR$
implies that $\overline{x''z_m}$. If $m=k-2$, then $x''yR$  implies that $x''z_m$.
The proof for the case $m\ge k$ is by induction on odd $m$ with the case $m=k-2$ as the basis.
By induction, $x''$ and every vertex of $\{z_1, z_3,\ldots, z_{m-2}\}$ are adjacent,
in particular, $x''$ and $z_{m-(k-1)}$ are adjacent. As $d_{D[I]}(\widetilde{I}, x')\le d_{D[I]}(\widetilde{I}, x'')$,
$x''\rightarrow z_{m-(k-1)}$.  Then the path $x''R[z_{m-(k-1)}, z_m]$ implies $\overline{x''z_m}$.
From now on, we have shown that $D[I]$ is either a semicomplete bipartite digraph or an empty digraph.
Analogously, we can show that $D[W]$ is either a semicomplete bipartite digraph or an empty digraph.
Recalled that $D[B]$ is either a semicomplete bipartite digraph or an empty digraph.
Combining these with Claims 1,5,8 and 10, we have that $D[V(D)\setminus V(P)]$ is either a
semicomplete bipartite digraph or an empty digraph. \end{proof}

From Theorems \ref{theorem} and \ref{theorem2}, Theorem \ref{main result} holds.


\end{document}